\newcommand{\abs}[1]{\lvert#1\rvert}
\newcommand{\gwt}{\widetilde{g}}
\spnewtheorem*{Poisson-tail-conjecture}{Poisson Tail Conjecture}{\normalfont\bfseries}{\it}
\spnewtheorem*{prime-k-tuples-conjecture}{Hardy--Littlewood Prime $k$-tuples Conjecture}{\normalfont\bfseries}{\it}
\spnewtheorem*{strong-prime-k-tuples-conjecture}{Conjecture Strong HL}{\normalfont\bfseries}{\it}
\spnewtheorem*{Montgomery-theorem}{Montgomery's Theorem}{\normalfont\bfseries}{\it}
\spnewtheorem*{Montgomery-conjecture}{Montgomery's Conjecture}{\normalfont\bfseries}{\it}
\spnewtheorem*{eqivalence-theorem}{Equivalence Theorem}{\normalfont\bfseries}{\it}
\spnewtheorem*{bounded-F-alpha-conjecture}{Bounded $F (\alpha)$ Conjecture}{\normalfont\bfseries}{\it}
\spnewtheorem*{strong-Hardy-Littlewood-conjecture}{Strong Hardy--Littlewood Prime $k$-Tuples Conjecture}{\normalfont\bfseries}{\it}
\spnewtheorem*{notation}{Notation}{\normalfont\bfseries}{\normalfont}
\spnewtheorem*{acknowledgment}{Acknowledgement}{\normalfont\bfseries}{\normalfont}
\begin{document}

\title*{Distribution of Large Gaps Between Primes}
\titlerunning{Distribution of Large Gaps Between Primes}
\author{Scott Funkhouser, Daniel A. Goldston, and Andrew H. Ledoan}
\authorrunning{S. Funkhouser, D.A. Goldston, and A.H. Ledoan}
\institute{Scott Funkhouser \at Space and Naval Warfare Systems Center Atlantic,
 One Innovation Drive,
 North Charleston, South Carolina, 29419-9022,
 \email{scott.funkhouser@gmail.com}
\and Daniel A. Goldston \at Department of Mathematics and Statistics,
 San Jos\'{e} State University,
 315 MacQuarrie Hall,
 One Washington Square,
 San Jos\'{e}, California 95192-0103,
 \email{daniel.goldston@sjsu.edu}
\and Andrew H. Ledoan \at Department of Mathematics,
 University of Tennessee at Chattanooga,
 415 EMCS Building (Mail Stop 6956),
 615 McCallie Avenue,
 Chattanooga, Tennessee 37403-2598,
 \email{andrew-ledoan@utc.edu}}

\maketitle


\abstract{We survey some past conditional results on the distribution of large differences between consecutive primes and examine how the Hardy--Littlewood prime $k$-tuples conjecture can be applied to this question.}

\keywords{Hardy--Littlewood prime $k$-tuples conjecture; Singular series; Gaps between primes}

\section{Introduction}

The distribution of gaps between consecutive primes around their average spacing is expected to be distributed in a Poisson distribution. Thus, while at first glance the  sequence of gaps appears random and irregular, we expect that they follows a very regular and well-behaved probability distribution. However, as we move to the distribution of larger than average gaps we expect to find increasing irregularity, especially as we reach the limiting size for these gaps. One would guess that near this maximal gap size any distribution will be exceedingly irregular. However, at present the available theoretical tools and well-accepted conjectures do not provide any widely believed standard model for large gaps. 

Define 
\begin{equation} \label{N (x, H)}
N (x, H)
 = \sum_{\substack{p_{n + 1} \leq x \\ p_{n + 1} - p_{n} \geq H}} 1
\end{equation}
and the weighted counting function
\begin{equation} \label{S (x, H)}
S (x, H)
 = \sum_{\substack{p_{n + 1} \leq x \\ p_{n + 1} - p_{n} \geq H}}  (p_{n + 1} - p_{n}) .
\end{equation}
Gallagher \cite{Gallagher1976} showed that a uniform version of the prime $k$-tuples conjecture of Hardy and Littlewood  implies that the primes are distributed in a Poisson distribution around their average.  
In Section 2 we will discuss how this result implies that, for fixed $\lambda>0$, \begin{equation} \label{Npoisson}
N (x, \lambda \log x)
 \sim e^{-\lambda} \frac{x}{\log x}, \qquad \text{as} \ \ x \to \infty,
\end{equation} 
and
\begin{equation} \label{Spoisson}
S (x, \lambda \log x)
 \sim (1+ \lambda) e^{-\lambda} x, \qquad \text{as} \ \ x \to \infty.
\end{equation} 
These results are for fixed $\lambda$, but we are interested in larger gaps. One approach is to assume that the Hardy--Littlewood conjectures hold for primes up to $x$ and for all $k$-tuples, where $k \leq f (x) \to \infty$ for some specified function $f (x)$, together with some strong error term. 
The Hardy--Littlewood conjectures will certainly fail when $k \asymp \log x$ and the error terms are often of size greater than $x^{1 / 2}$, and therefore this approach has definite limitations. There are also obstacles in  applying these conjectures to $N(x,H)$ and $S(x,H)$.  However, if we ignore these issues and consider this approach as only heuristic, then the following conjecture seems reasonable.

\begin{Poisson-tail-conjecture}
For any $\epsilon > 0$ and $1 \leq H \leq \log^{2 - \epsilon} x$, we have
\begin{equation} \label{Hope}
N (x, H)
 \asymp  e^{-H / \log x} \frac{x}{\log x} \quad \text{and} \quad
S (x, H)
 \asymp \left(1 + \frac{H}{\log x}\right) e^{-H / \log x} x.
\end{equation}
For $H > \log^{2 + \epsilon} x$, we have
\begin{equation} \label{largestgap}
N (x, H)
 = S (x, H)
 =0.
\end{equation}
\end{Poisson-tail-conjecture}

Here, $f (x) \asymp g (x)$ means $f (x) \ll g (x)$ and $g (x) \ll f (x)$. In the critical range $\log^{2 - \epsilon} x \leq H \leq  \log^{2 + \epsilon} x$, we have nothing to contribute. Other authors have made stronger conjectures than \eqref{largestgap}. In 1935 Cram{\'e}r \cite{Cramer35}  conjectured that
\begin{equation}
\limsup_{p_{n} \to \infty} \frac{p_{n + 1} - p_{n}}{\log^{2} p_{n}}
 = 1,
\end{equation}
while  in 1995 Granville \cite{Granville1995} conjectured that Cram{\'e}r's conjecture is false and that
\begin{equation}
\limsup_{p_{n} \to \infty} \frac{p_{n + 1} - p_{n}}{\log^{2} p_{n}}
 \geq 2 e^{-\gamma}
 = 1.12292\ldots,
\end{equation}
based on a Cram{\'e}r model modified to include divisibility by small primes. Our conjecture is much weaker and only implies that for any $\delta > 0$, there are prime gaps of size $> \log^{2 - \delta} p_{n}$, and there are no prime gaps of size $> \log^{2 + \delta} p_{n}$. It should be mentioned however that the same modification Granville used in the Cram{\'e}r model was exploited by Maier \cite{Maier1985} to prove there is no asymptotic formula for the number of primes in intervals $(x, x+ \log^{C} x]$, for any given positive number $C$. This result of Maier demonstrates that all of these conjectures on large gaps are far from certain. 

Our purpose in this paper is to describe earlier conditional work on large gaps between primes. The earliest such work assumes the Riemann Hypothesis and is mainly due to Cram{\'e}r \cite{Cramer36}  and to Selberg \cite{Selberg1942}. Later, after Montgomery's work on the pair correlation of zeros of the Riemann zeta-function \cite{Montgomery1972}, some of these Riemann Hypothesis results were slightly improved by Gallagher and Mueller \cite{GallagherMueller1978}, by Mueller \cite{Mueller1981}, and by Heath-Brown \cite{Heath-Brown82} assuming a pair correlation conjecture. All of these results can be obtained from estimating the second moment (or variance) of the number of primes in short intervals. As an application of these second moment results, one can obtained conditionally nearly optimal upper bounds  on the sum
\begin{equation} \label{Csum}
\mathcal{C} (x)
 = \sum_{p_{n + 1} \leq x} (p_{n + 1} - p_{n})^{2},
\end{equation}
and also the closely related and slightly simpler sum
\begin{equation} \label{Ssum}
\mathcal{S} (x)
 = \sum_{p_{n+1} \leq x} \frac{(p_{n + 1} - p_{n})^{2}}{p_{n + 1}}.
\end{equation}
As far as we know, $\mathcal{C} (x)$ was first bounded on the Riemann Hypothesis by Cram{\'e}r \cite{Cramer36} in 1936, while $\mathcal{S} (x)$ was studied by Selberg \cite{Selberg1942} in 1942. 

To obtain stronger results, we return to the Hardy--Littlewood conjectures. The conjecture for pairs (or 2-tuples) with a strong error term is well-known to provide the same estimates for the second moment for primes in short intervals as that obtained by assuming the Riemann Hypothesis and the Pair Correlation Conjectures. In 2004 Montgomery and Soundararajan \cite{MontgomerySoundararajan2004} were able to extend this method to give asymptotic formulas for the $2k$-th moments for the primes in short intervals, assuming the Hardy--Littlewood conjecture for tuples of size $\leq 2 k$ with a strong error term in the conjecture. At present, this approach is the most promising direction towards connecting results on large gap problems to a well-established, if extremely difficult, conjecture on primes. In Section 8 of this paper we use a fourth moment result to nearly resolve the conjectured asymptotic formulas for $\mathcal{C} (x)$ and $\mathcal{S} (x)$. 

\begin{notation}
We always assume that $k, m$, and $n$ are integers. We denote the $n$-th prime by $p_{n}$, and $p$ will always denote a prime. By $\epsilon$ we mean any sufficiently small positive real number. 
\end{notation}

\begin{acknowledgment}
The authors wish to express their sincere gratitude and appreciation to the anonymous referee for carefully reading the original version of this paper and for making a number of very helpful comments and suggestions.
\end{acknowledgment}
 
\section{Gallagher's Theorem and the Poisson Distribution of Primes}

We first introduce the Hardy--Littlewood prime $k$-tuples conjecture in the form used by Gallager. Let $\mathcal{H}_{k} = \{h_{1}, \ldots, h_{k}\}$ be a set of $k$ distinct nonnegative integers. Let $\pi (x; \mathcal{H}_{k})$ denote the number of positive integers $n \leq x$ for which $n + h_{1}, \ldots, n + h_{k}$ are simultaneously primes. Then the simplest form of the Hardy--Littlewood prime $k$-tuples conjecture \cite{HardyLittlewood1922} may be stated as follows.

Let
\begin{equation} \label{singular}
\mathfrak{S} (\mathcal{H}_{k})
 = \prod_{p} \left(1 - \frac{1}{p}\right)^{-k} \left(1 - \frac{\nu_{\mathcal{H}_{k}} (p)}{p}\right),
\end{equation}
where
$\nu_{\mathcal{H}_{k}} (p)$ denotes the number of distinct residue classes modulo $p$ occupied by the elements of $\mathcal{H}_{k}$. Note that, in particular, if $\nu_{\mathcal{H}_{k}} (p) = p$ for some prime number $p$, then $\mathfrak{S} (\mathcal{H}_{k}) = 0$. However, if $\nu_{\mathcal{H}_{k}} (p) < p$ for all prime numbers $p$, then  $\mathfrak{S} (\mathcal{H}_{k}) \neq 0$ in which case the set $\mathcal{H}_{k}$ is called {\it admissible}.

\begin{prime-k-tuples-conjecture}
For each fixed integer $k \geq 2$ and admissible set $\mathcal{H}_{k}$, we have
\begin{equation} \label{hlconj}
\pi (x; \mathcal{H}_{k})
 = \mathfrak{S} (\mathcal{H}_{k}) \frac{x}{\log^{k} x} (1 + o_{k} (1)),
\end{equation}
uniformly for $\mathcal{H}_{k} \subset [1, h]$, where $h \sim \lambda \log x$ as $x \to \infty$ and $\lambda$ is a positive constant.
\end{prime-k-tuples-conjecture}

If $\mathcal{H}_{k}$ is not admissible, then there is a fixed prime $p$ that always divides at least one of the $k$ numbers $n + h_{i}$, with $1 \leq i \leq k$, and hence 
\begin{equation}\label{notadmissible}  \pi(x; \mathcal{H}_k) \leq k  \qquad \text{if} \ \ \mathcal{H}_k \  \ \text{is not admissible.}  \end{equation}
While a proof of \eqref{hlconj} appears beyond our current state of knowledge, we do know by sieve methods (see \cite{HalberstamRichert1974}) the useful upper bound
\begin{equation} \label{HLsievebound}
\pi (x; \mathcal{H}_{k})
 \ll_{k} \mathfrak{S} (\mathcal{H}_{k}) \frac{x}{\log^{k} x}.
\end{equation}

\begin{theorem}[\textbf{Gallagher}] \label{theorem-1}
Let $P_{k} (N, h)$ denote the number of positive integers $n \leq N$ for which the interval $(n, n + h]$ contains exactly $k$ primes. Assuming the Hardy--Littlewood prime $k$-tuples conjecture, we have
\begin{equation*}
P_{k} (N, h)
 \sim \frac{e^{-\lambda} \lambda^{k}}{k!} N, \quad \text{for} \ \ h \sim \lambda \log N \ \ \text{as} \ \ N \to \infty.
\end{equation*}
\end{theorem}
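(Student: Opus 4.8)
The plan is to use the method of moments. For $n \le N$ write $\pi(n,n+h]$ for the number of primes in the interval $(n,n+h]$, so that $P_{k}(N,h)$ counts those $n \le N$ with $\pi(n,n+h]=k$. I would work with the binomial moments $B_{r} = B_{r}(N,h) = \sum_{n \le N}\binom{\pi(n,n+h]}{r}$, which are linked to the $P_{k}$ by the exact identity $B_{r} = \sum_{k \ge r}\binom{k}{r}P_{k}$ and, by inversion, by $P_{k} = \sum_{r \ge k}(-1)^{r-k}\binom{r}{k}B_{r}$. The point is that $B_{r}/N$ is exactly the $r$-th binomial moment of the distribution we wish to identify, and the Poisson law with parameter $\lambda$ is characterized among the relevant distributions by having binomial moments equal to $\lambda^{r}/r!$.

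First I would evaluate the moments. Expanding $\binom{\pi(n,n+h]}{r}$ as a sum over $r$-element subsets of the primes in $(n,n+h]$ and interchanging the order of summation, each such subset has the form $\{n+h_{1},\ldots,n+h_{r}\}$ with $\mathcal{H}_{r} = \{h_{1},\ldots,h_{r}\}\subset[1,h]$, so that
\[
B_{r} = \sum_{\substack{\mathcal{H}_{r}\subset[1,h]\\ \abs{\mathcal{H}_{r}}=r}} \pi(N;\mathcal{H}_{r}) + O(h^{r}),
\]
where the error absorbs the $O(h)$ values of $n$ near the endpoint and is negligible since $h \asymp \log N$. Inadmissible tuples contribute only $O(r)$ each by \eqref{notadmissible}, and there are $O(h^{r})$ of them, for a total of $O(\log^{r} N)=o(N)$. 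Into the admissible tuples I would insert the Hardy--Littlewood conjecture \eqref{hlconj} to obtain
\[
B_{r} \sim \frac{N}{\log^{r} N}\sum_{\substack{\mathcal{H}_{r}\subset[1,h]\\ \abs{\mathcal{H}_{r}}=r}} \mathfrak{S}(\mathcal{H}_{r}).
\]

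The arithmetic heart of the argument is then Gallagher's averaging identity for the singular series, namely $\sum_{\mathcal{H}_{r}\subset[1,h]}\mathfrak{S}(\mathcal{H}_{r}) \sim h^{r}/r!$ as $h\to\infty$ (the number of $r$-subsets being $\binom{h}{r}\sim h^{r}/r!$, so that the singular series averages to $1$). Granting this and using $h\sim\lambda\log N$ gives $B_{r} \sim \frac{N}{\log^{r} N}\cdot\frac{(\lambda\log N)^{r}}{r!} = \frac{\lambda^{r}}{r!}N$, precisely the Poisson binomial moments. To recover $P_{k}$ I cannot simply substitute into the inversion formula, since \eqref{hlconj} is available only for each fixed $r$ and not uniformly as $r\to\infty$. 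Instead I would apply the Bonferroni inequalities: truncating the alternating sum at level $R$ sandwiches $P_{k}(N,h)$ between two partial sums; letting $N\to\infty$ with $R$ fixed replaces each $B_{r}/N$ by $\lambda^{r}/r!$; and letting $R\to\infty$ then forces both bounds to the common limit $\sum_{r\ge k}(-1)^{r-k}\binom{r}{k}\frac{\lambda^{r}}{r!} = \frac{e^{-\lambda}\lambda^{k}}{k!}$.

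I expect two genuinely delicate points. The first is the singular series average $\sum_{\mathcal{H}_{r}}\mathfrak{S}(\mathcal{H}_{r}) \sim h^{r}/r!$, which requires expanding $\mathfrak{S}(\mathcal{H}_{r})$ — for instance through a representation of the local factors as a sum over residues or via the Fourier expansion of the local densities — and extracting the main term while controlling the lower-order contributions uniformly in the configuration of $\mathcal{H}_{r}$. The second is the moments-to-distribution step: because \eqref{hlconj} holds only for fixed tuple size, the passage from the $B_{r}$ back to $P_{k}$ must proceed through the Bonferroni truncation rather than a direct summation, and one must verify that the truncated upper and lower bounds both converge to the Poisson probabilities as the truncation level grows.
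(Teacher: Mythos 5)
Your argument is correct, and its skeleton is the same as the paper's: compute moments of the prime-counting variable $\pi(n,n+h]$ by reducing to sums of $\pi(N;\mathcal{H}_r)$ over $r$-tuples, feed in the Hardy--Littlewood conjecture \eqref{hlconj} together with the bound \eqref{notadmissible} for inadmissible tuples, and invoke Gallagher's singular series average (your unordered-subset version $\sum_{\mathcal{H}_r}\mathfrak{S}(\mathcal{H}_r)\sim h^r/r!$ is exactly \eqref{GalSingAv} divided by $r!$, and the paper likewise quotes this as a known input rather than reproving it). The differences are in the bookkeeping and in the final limiting step, and both work in your favor. The paper uses the power moments $M_k(N)=\sum_{n\le N}(\pi(n+h)-\pi(n))^k$, which forces a decomposition via Stirling numbers of the second kind according to the number of distinct primes, and then concludes by appealing to ``the standard theorems on moments,'' i.e.\ the fact that the Poisson law is determined by its moments. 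You instead use the binomial moments $B_r=\sum_{n\le N}\binom{\pi(n,n+h]}{r}$, which land directly on sums over $r$-subsets with no Stirling-number combinatorics, and you recover $P_k$ by the explicit inversion $P_k=\sum_{r\ge k}(-1)^{r-k}\binom{r}{k}B_r$ truncated via the Bonferroni inequalities. Your identification of the delicate point --- that \eqref{hlconj} is only available for each fixed $r$, so one must truncate at a fixed level $R$, let $N\to\infty$, and only then let $R\to\infty$ --- is exactly right, and making it explicit via Bonferroni is more self-contained than the paper's appeal to the general moment convergence theorem. What the paper's route buys is a clean statement about all Poisson moments $m_k(\lambda)$ as a byproduct; what yours buys is an elementary, fully effective sandwich for each individual $P_k$ with no measure-theoretic input.
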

The Poisson distribution of primes manifests itself in Gallagher's proof through the fact that the singular series is on average asymptotic to 1 when averaged over all tuples. Gallagher \cite{Gallagher1976} proved  that, as $h \to \infty$,
\begin{equation} \label{GalSingAv}
\sum_{\substack{1 \leq h_{1}, \ldots, h_{k} \leq h \\ h_{1}, \ldots, h_{k} \ \text{distinct}}}
 \mathfrak{S} (\mathcal{H}_{k})
 = h^{k} + O(h^{k - 1 / 2 + \epsilon}),
\end{equation}
for each fixed $k \geq 2$.

\begin{proof} We give Gallagher's proof. For $k$ a positive integer, the $k$-th moment for the number of primes in the interval $(n, n + h]$ is
\begin{equation*}
M_{k} (N)
 = \sum_{n \leq N} (\pi (n + h) - \pi (n))^{k}
 = \sum_{n \leq N} \sum_{n < p_{1}, \ldots, p_{k} \leq n + h} 1.
\end{equation*}
We group terms according to the number $r$ of distinct primes among the primes $p_{1}, \ldots, p_{k}$ and obtain
\begin{equation*}
M_{k} (N)
 = \sum_{r = 1}^{k} \left\{\begin{array}{c} k \\ r \end{array}\right\} \sum_{\substack{1 \leq h_{1}, \ldots, h_{r} \leq h \\ h_{1}, \ldots, h_{r} \ \text{distinct}}} \pi (N; \mathcal{H}_{r}),
\end{equation*}
where $\left\{\begin{array}{c} k \\ r \end{array}\right\}$ is used to denote the Stirling number of the second type equal to the number of partitions of a set of $k$ elements into $r$ nonempty subsets.  
By \eqref{hlconj}, \eqref{notadmissible}, and \eqref{GalSingAv}, we have for fixed $k$ and $h \sim \lambda \log N$ as $N \to \infty$,
\begin{equation*}
\begin{split} M_{k} (N)
 &\sim \sum_{r = 1}^{k} \left\{\begin{array}{c} k \\ r \end{array}\right\} \sum_{\substack{1 \leq h_{1}, \ldots, h_{r} \leq h \\ h_{1},\ldots, h_{r} \ \text{distinct}}} \mathfrak{S} (\mathcal{H}_{r}) \frac{N}{(\log N)^{r}} \\
 &\sim \sum_{r = 1}^{k} \left\{\begin{array}{c} k \\ r \end{array}\right\} h^{r}\frac{N}{(\log N)^{r}} \\
 &\sim m_{k} (\lambda) N,
\end{split}
\end{equation*}
where  
\begin{equation} \label{Poissonmoment}
m_{k} (\lambda)
 = \sum_{r = 1}^{k} \left\{\begin{array}{c} k \\ r \end{array} \right\} \lambda^{r},
\end{equation}
which is the $k$-th Poisson moment with expected value $\lambda$. Theorem \ref{theorem-1} now follows from the standard theorems on moments.
\end{proof}

From Theorem \ref{theorem-1}, we now prove \eqref{Npoisson} and \eqref{Spoisson}.
\begin{theorem} \label{theorem-2}
Assuming the Hardy--Littlewood prime $k$-tuples conjecture, then for fixed $\lambda > 0$, and $H \sim \lambda \log x$ as $x \to \infty$,  we have
\begin{equation} \label{thmNpoisson}
N (x, \lambda \log x)
 = \sum_{\substack{p_{n + 1} \leq x \\ p_{n + 1} - p_{n} \geq \lambda \log x}} 1
 \sim e^{-\lambda} \frac{x}{\log x}
\end{equation} 
and
\begin{equation} \label{thmSpoisson}
S (x, \lambda \log x)
 = \sum_{\substack{p_{n + 1} \leq x \\ p_{n + 1} - p_{n} \geq \lambda \log x}} (p_{n + 1} - p_{n})
 \sim (1 + \lambda) e^{-\lambda} x.
\end{equation}
\end{theorem}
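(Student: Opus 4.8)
The plan is to deduce both asymptotics from the $k = 0$ case of Gallagher's theorem (Theorem \ref{theorem-1}), which gives that the number $P_{0} (x, h)$ of integers $n \leq x$ for which $(n, n + h]$ contains no primes satisfies $P_{0} (x, h) \sim e^{-\lambda} x$ when $h \sim \lambda \log x$. The bridge between prime-free intervals and gaps between consecutive primes is a counting identity: the window $(n, n + h]$ is prime-free exactly when $n$ sits inside a single gap $(p_{m}, p_{m + 1})$ with enough room for the whole interval of length $h$ to fit before the next prime. Writing $g_{m} = p_{m + 1} - p_{m}$, the integers with $p_{m} \leq n < p_{m + 1} - h$ are precisely those for which $(n, n + h]$ falls in this gap; counting them and summing over all gaps up to $x$ (discarding the $O (1)$ floor correction per gap and the single boundary gap near $x$) yields
\begin{equation*}
P_{0} (x, h)
 = \sum_{\substack{p_{m + 1} \leq x \\ p_{m + 1} - p_{m} > h}} (p_{m + 1} - p_{m} - h) + O\!\left(\frac{x}{\log x}\right).
\end{equation*}

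First I would record the two consequences of this identity. Letting $G (h)$ denote the displayed sum, we have the exact algebraic relation $G (h) = S (x, h) - h\, N (x, h)$, since the sum splits as $\sum_{g_{m} > h} g_{m} - h \sum_{g_{m} > h} 1$. Hence, by Gallagher, $S (x, h) - h\, N (x, h) = G (h) \sim e^{-\lambda} x$ for $h \sim \lambda \log x$; and once $N (x, h) \sim e^{-\lambda} x / \log x$ is in hand, \eqref{thmSpoisson} follows immediately from $S (x, h) = G (h) + h\, N (x, h)$ with $h = \lambda \log x$, giving $S (x, h) \sim e^{-\lambda} x + \lambda e^{-\lambda} x = (1 + \lambda) e^{-\lambda} x$.

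The crux is therefore extracting $N (x, h)$ itself. The difficulty is that $N (x, h)$ is morally the negative $h$-derivative of $P_{0} (x, h)$, and one cannot differentiate an asymptotic. To get around this I would exploit that $h \mapsto G (h)$ is continuous, decreasing, and piecewise linear with slope $-N (x, h)$; comparing its values at $h$ and $h \pm \delta \log x$ for fixed small $\delta > 0$ and using the monotonicity of $N (x, \cdot)$ gives the sandwich
\begin{equation*}
\frac{G (h) - G (h + \delta \log x)}{\delta \log x} \leq N (x, h) \leq \frac{G (h - \delta \log x) - G (h)}{\delta \log x}.
\end{equation*}
Applying Gallagher to each of the three arguments (with parameters $\lambda$ and $\lambda \pm \delta$), letting $x \to \infty$ and then $\delta \to 0$, both outer quantities tend to $e^{-\lambda} x / \log x$ (using $(1 - e^{-\delta}) / \delta \to 1$ and $(e^{\delta} - 1) / \delta \to 1$), which squeezes $N (x, h)$ to the claimed value and proves \eqref{thmNpoisson}.

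The main obstacle is exactly this last passage from the single-scale asymptotic for $P_{0}$ to the derivative-like quantity $N (x, h)$, together with the care needed in the order of limits: the error term $O (x / \log x)$ in the gap identity, after division by $\delta \log x$, contributes $O (x / (\delta \log^{2} x)) = o (x / \log x)$ only for $\delta$ held fixed while $x \to \infty$, so one must pass to the limit in $x$ first and only afterward let $\delta \to 0$. Reconciling the non-strict threshold $p_{m + 1} - p_{m} \geq \lambda \log x$ in the statement with the strict inequality defining $G (h)$ costs nothing, since the number of gaps of any prescribed integer length is negligible and is absorbed by the same sandwich.
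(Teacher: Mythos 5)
Your proposal is correct and follows essentially the same route as the paper: reduce to the $k=0$ case of Gallagher's theorem, translate the count of prime-free intervals into the weighted gap sum $\sum_{g_m > h}(g_m - h)$ (the paper's $S_1(x,H)$), recover \eqref{thmSpoisson} from the identity $S_1 = S - HN$, and extract $N(x,H)$ by sandwiching the difference quotient using monotonicity of $N(x,\cdot)$ with $x \to \infty$ before $\delta \to 0$. The only cosmetic difference is that the paper writes the sandwich via the integral representation $S_1(x,H) = \int_H^\infty N(x,u)\,du$ rather than via the piecewise linearity of $G$, which is the same inequality.
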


\begin{proof} Let  
\begin{equation} \label{S1}
S_{1} (x, H)
 = \sum_{\substack {p_{n} \leq x \\ p_{n + 1} - p_{n} \geq H}} ((p_{n + 1} - p_{n}) - H).
\end{equation}
Taking $k = 0$ in Theorem \ref{theorem-1}, we  have 
\begin{equation*}
P_{0} (N, h)
 \sim e^{-\lambda} N,
\end{equation*}
where $P_{0} (N, h)$ is the number of $j \leq N$ for which the interval $(j, j + h]$ contain no primes. This interval has the same number of primes as the interval $[j + 1, j + \lfloor h\rfloor]$, which contains no primes if and only if  there is an $n$ for which $p_{n} \leq j$ and $p_{n + 1} \geq j + \lfloor h\rfloor + 1$, which can occur if and only if $p_{n + 1} - p_{n} \geq \lfloor h\rfloor + 1$. Hence, in this case, $ p_{n} \leq j \leq p_{n + 1} - \lfloor h\rfloor - 1$, and there are $ p_{n + 1} - p_{n} - \lfloor h\rfloor$ such $j$'s for this $p_{n}$. Thus, we have
\begin{equation*}
\begin{split}
P_{0} (N, h)
 &= \sum_{\substack {p_{n} \leq N \\ p_{n + 1} - p_{n} \geq \lfloor h\rfloor}} (p_{n + 1} - p_{n} - \lfloor h\rfloor ) \\
 &= \sum_{\substack {p_{n} \leq N \\ p_{n + 1} - p_{n} \geq h}} (p_{n + 1} - p_{n} - \lfloor h\rfloor ) \\
 &= S_{1} (N, h) + O \left(\frac{N}{\log N}\right).
\end{split}
\end{equation*}
We conclude that, for $H \sim \lambda \log x$,
\begin{equation} \label{S1asymptotic}
S_{1} (x, H)
 \sim e^{-\lambda} x.
\end{equation}
From \eqref{N (x, H)}, \eqref{S (x, H)}, and \eqref{S1},
\begin{equation*}
S_{1} (x, H)
 = S (x, H) - H N (x, H),
\end{equation*}
so that \eqref{Spoisson} follows from \eqref{Npoisson} and \eqref{S1asymptotic}.  To prove \eqref{Npoisson}, we note that
\begin{equation} \label{S1-Nformula}
S_{1} (x, H)
 = \int_{H}^{\infty} N (x, u) \,du
\end{equation}
and, since $N (x, u)$ is a nonincreasing function of $u$, we have for any $\delta > 0$,
\begin{equation*}
\frac{1}{\delta H} \int_{H}^{(1 + \delta) H} N (x, u) \,du
 \leq N (x, H)
 \leq \frac{1}{\delta H} \int_{(1 - \delta) H}^{H} N (x, u) \,du.
\end{equation*}
Therefore, $N (x, H)$ is bounded between 
\begin{equation*}
\frac{S_{1} (x, H) - S_{1} (x, (1 \pm \delta) H)}{\pm \delta H},
\end{equation*}
which by \eqref{S1asymptotic} is, as $\delta \to 0$, 
\begin{equation*}
\begin{split}
&\sim \left(\frac{e^{-\lambda} - e^{-(1 \pm \delta) \lambda}}{\pm \delta \lambda}\right) \frac{x}{\log x} \\
&\sim \left(\frac{1- e^{\mp \delta \lambda}}{\pm \delta \lambda}\right)e^{-\lambda} \frac{x}{\log x} \\
&\sim (1 + O (\delta \lambda)) e^{-\lambda} \frac{x}{\log x} \\
&\sim e^{-\lambda} \frac{x}{\log x},
\end{split}
\end{equation*}
thus proving \eqref{Npoisson}. 
\end{proof}

There is an alternative approach for proving Theorem \ref{theorem-2} which avoids moments. In \cite{GoldstonLedoan2013} the second two authors proved, using inclusion-exclusion with the Hardy--Littlewood prime $k$-tuples conjecture, that for fixed $\lambda > 0$ and $d \sim \lambda \log x$, 
\begin{equation} \label{eq2}
\mathcal{N} (x, d)
 = \sum_{\substack{p_{n + 1} \leq x \\ p_{n + 1} - p_{n} = d}} 1
 \sim e^{-\lambda} \mathfrak{S} (d) \frac{x}{\log^{2} x},
\end{equation}
where
\begin{equation*}
\mathfrak{S} (d)
= \left\{\begin{array}{ll}
      {\displaystyle 2 C_{2} \prod_{\substack{p \mid d \\ p > 2}} \left(\frac{p - 1}{p - 2}\right)} & \mbox{if $d$ is even,} \\
      0 &  \mbox{if $d$ is odd,}
\end{array}
\right.
\end{equation*}
and
\begin{equation*}
C_{2}
 = \prod_{p > 2} \left( 1 - \frac{1}{(p - 1)^{2}}\right)
 = 0.66016\ldots.
\end{equation*}
Here, $\mathfrak{S} (d)$ is the singular series given in \eqref{singular} when $k = 2$ and $\mathcal{H}_{2} = \{0, d\}$. As a consequence of \eqref{GalSingAv} we have
\begin{equation*}
\sum_{d \leq h} \mathfrak{S} (d)
 \sim h.
\end{equation*}
Hence, with $H \sim \lambda \log x$, by partial summation, 
\begin{equation*}
\begin{split}
\sum_{\substack{p_{n + 1} \leq x \\ p_{n + 1} - p_{n} < H}} 1
 &= \sum_{d < H} \mathcal{N} (x, d) \\
 &\sim \left(\sum_{d < H} e^{-d / \log x} \mathfrak{S} (d)\right) \frac{x}{\log^{2} x} \\
 &\sim \left(\int_{0}^{\lambda \log x} e^{-u / \log x} \,du\right) \frac{x}{\log^{2} x} \\
 &\sim (1 - e^{-\lambda}) \frac{x}{\log x}.
\end{split}
\end{equation*}
Thus,
\begin{equation*}
\begin{split}
N (x, H)
 &= \sum_{p_{n} \leq x} 1 - \sum_{\substack{p_{n + 1} \leq x \\ p_{n + 1} - p_{n} < H}} 1 \\
 &\sim e^{-\lambda} \frac{x}{\log x},
\end{split}
\end{equation*}
which proves \eqref{Npoisson}. The same argument gives 
\begin{equation*}
\begin{split}
S (x, H)
 &= \sum_{p_{n} \leq x} (p_{n + 1} - p_{n}) - \sum_{\substack{p_{n + 1} \leq x \\ p_{n + 1} - p_{n} < H}} (p_{n + 1} - p_{n}) \\
 &\sim x - \sum_{d < H} d \mathcal{N} (x, d) \\
 &\sim x- \left(\int_{0}^{\lambda \log x} u e^{-u / \log x} \,du\right) \frac{x}{\log^{2} x} \\
 &\sim (1 + \lambda  )e^{-\lambda} x,
\end{split}
\end{equation*}
which proves  \eqref{Spoisson}.

\section{Bounding the Number of Large Gaps with Moments}

Moving to larger gaps between primes, we reduce our goal of finding their distribution and only seek to find bounds on their frequency. A simple method for bounding the number of large gaps was introduced by Selberg \cite{Selberg1942}. Let
\begin{equation} \label{thetamoment}
M_{2 k} (x, h)
 = \int_{1}^{x} (\vartheta (y + h) - \vartheta (y) - h)^{2 k} \,dy,
\end{equation}
where
\begin{equation*}
\vartheta (x)
 = \sum_{p \leq x} \log p
\end{equation*}
and $k$ is a positive integer.

\begin{lemma} \label{lemma-1}
For $k \geq 1$ and $H \geq 1$, we have
\begin{equation}
S (x, H)
 = \sum_{\substack{p_{n + 1} \leq x \\ p_{n + 1} - p_{n} \geq H}} (p_{n + 1} - p_{n})
 \ll \left(\frac{2}{H}\right)^{2 k} M_{2 k} \left(x, \frac{H}{2}\right).
\end{equation}
\end{lemma}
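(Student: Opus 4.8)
The plan is to exploit the elementary but crucial observation that a single large gap $p_{n+1} - p_n \ge H$ forces an entire interval of values $y$ for which the short interval $(y, y + H/2]$ is \emph{empty} of primes, and on that range the integrand of $M_{2k}(x, H/2)$ takes its maximal value. Concretely, suppose $p_{n+1} \le x$ and $p_{n+1} - p_n \ge H$. For any $y$ with $p_n \le y < p_{n+1} - H/2$, the interval $(y, y + H/2]$ lies inside the prime-free gap $(p_n, p_{n+1})$, so $\vartheta(y + H/2) - \vartheta(y) = 0$ and hence
\[
(\vartheta(y + H/2) - \vartheta(y) - H/2)^{2k} = \left(\frac{H}{2}\right)^{2k}.
\]
The choice of shift $H/2$ (rather than $H$) is exactly what guarantees this range of $y$ is nonempty: its length is $(p_{n+1} - p_n) - H/2 \ge H/2 > 0$.

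Next I would sum these local contributions. Since the intervals $[p_n, p_{n+1} - H/2)$ attached to distinct gaps sit inside the disjoint open gaps $(p_n, p_{n+1})$, they are pairwise disjoint; moreover each lies in $[1, x]$ because $p_n \ge 2$ and $y < p_{n+1} \le x$. As $2k$ is even, the integrand of $M_{2k}(x, H/2)$ is nonnegative everywhere, so discarding all $y$ outside these intervals only decreases the integral. This yields
\[
M_{2k}\!\left(x, \frac{H}{2}\right) \ge \left(\frac{H}{2}\right)^{2k} \sum_{\substack{p_{n+1} \le x \\ p_{n+1} - p_n \ge H}} \bigl((p_{n+1} - p_n) - \tfrac{H}{2}\bigr).
\]

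Finally I would convert the shifted gap length back to the full gap length. Because $p_{n+1} - p_n \ge H$ forces $H/2 \le (p_{n+1} - p_n)/2$, we have $(p_{n+1} - p_n) - H/2 \ge (p_{n+1} - p_n)/2$, and therefore
\[
M_{2k}\!\left(x, \frac{H}{2}\right) \ge \frac{1}{2}\left(\frac{H}{2}\right)^{2k} \sum_{\substack{p_{n+1} \le x \\ p_{n+1} - p_n \ge H}} (p_{n+1} - p_n) = \frac{1}{2}\left(\frac{H}{2}\right)^{2k} S(x, H).
\]
Rearranging gives $S(x, H) \le 2 (2/H)^{2k} M_{2k}(x, H/2)$, which is the claimed bound.

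There is no deep analytic obstacle here: the statement is an unconditional geometric inequality, and no prime $k$-tuples input is needed. The only points demanding care are bookkeeping ones — verifying that the chosen $y$-intervals are genuinely disjoint and contained in $[1,x]$, and recognizing that the evenness of the exponent $2k$ is what makes the integrand nonnegative and thus permits discarding the unwanted part of the integral. The one genuine idea is the choice of the shift $H/2$, which simultaneously keeps each $y$-interval of length comparable to the gap and lets the integrand attain $(H/2)^{2k}$ on it.
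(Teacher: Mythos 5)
Your proposal is correct and follows essentially the same route as the paper: restrict the integral defining $M_{2k}(x,H/2)$ to the prime-free stretches $[p_n,\,p_{n+1}-H/2)$, where the integrand equals $(H/2)^{2k}$, and use $(p_{n+1}-p_n)-H/2\ge\frac12(p_{n+1}-p_n)$. The extra bookkeeping you supply (disjointness of the $y$-intervals and their containment in $[1,x]$) is implicit in the paper's one-line summation step.
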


\begin{proof} We have $\vartheta (y + h) - \vartheta (y) = 0$ whenever there are consecutive primes $p_{n} \leq y$ and $y + h < p_{n + 1}$, which is when $y \in [p_{n}, p_{n + 1} - h)$ which has length $(p_{n + 1} - p_{n}) - h$. Suppose $p_{n + 1} - p_{n} \geq H$ and take $h = H / 2$. Then
\begin{equation*}
\int_{p_{n}}^{p_{n + 1} - h} (\vartheta (y + h) - \vartheta (y) - h)^{2 k} \,dy
 = h^{2 k} ((p_{n + 1} - p_{n}) - h)
 \geq \frac{1}{2} h^{2 k} (p_{n + 1} - p_{n}).
\end{equation*}
Summing over $p_{n + 1} \leq x$ gives the result.
\end{proof}

There is a slightly different moment often used in this subject, namely 
\begin{equation} \label{psimoment}
m_{2 k} (x, h)
 = \int_{1}^{x} (\psi (y + h) - \psi (y) - h)^{2 k} \,dy,
\end{equation}
where
\begin{equation*}
\psi (x)
 = \sum_{p^{m} \leq x} \log p
 = \sum_{n \leq x} \Lambda (n).
\end{equation*}
The next lemma shows that the two moments are essentially the same size. The error term here saves only a power of $\log x$ over the actual size of these moments, but that is sufficient for our applications.

\begin{lemma} \label{lemma-2}
For $k \geq 1, \ x \geq 2$, and $h \geq 1$, we have
\begin{equation*}
m_{2 k} (x, h)^{1 / 2 k}
 = M_{2 k} (x, h)^{1 / 2 k} + O \left((x h^{k})^{1 / 2 k}\right).
\end{equation*}
\end{lemma}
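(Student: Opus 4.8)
The goal is to compare the two moments $m_{2k}(x,h)$ and $M_{2k}(x,h)$, which differ only in whether we use $\psi$ or $\vartheta$ in the counting function. Since $\psi(y) = \vartheta(y) + \sum_{m \geq 2} \vartheta(y^{1/m})$, the difference $\psi(y) - \vartheta(y)$ accounts for the contribution of prime powers. The plan is to write the integrand of $m_{2k}$ as $((\vartheta(y+h)-\vartheta(y)-h) + E(y,h))^{2k}$, where $E(y,h) = (\psi(y+h)-\psi(y)) - (\vartheta(y+h)-\vartheta(y))$ measures the contribution of prime powers $p^m$ with $m \geq 2$ lying in the interval $(y, y+h]$.

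First I would estimate the $L^{2k}$-norm of this error term $E$. The prime powers with $m \geq 2$ up to $x$ are dominated by the squares of primes, of which there are $O(\sqrt{x}/\log x)$ up to $x$, each contributing a $\log p \ll \log x$ to $\psi$. Since $E(y,h) \geq 0$ is supported on those $y$ for which the short interval $(y,y+h]$ contains such a prime power, and each prime power $p^m \leq x+h$ is counted for $y$ in an interval of length $h$, I would bound $\int_1^x E(y,h)^{2k}\,dy$ by summing over prime powers: the individual contribution of a single prime power of value $\leq x$ is at most $(\log x)^{2k}$ times the measure $h$ of $y$-values that see it. Being careful that isolated higher prime powers in a short interval contribute $(\log x)^{2k}$ while the total number of relevant prime powers is $O(\sqrt{x})$, this should yield $\int_1^x E(y,h)^{2k}\,dy \ll x\, h^{k}$ up to logarithmic factors, which I would absorb into the stated error by noting that the lemma only claims to save one power of $\log x$.

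The main technical device is the triangle inequality in $L^{2k}$: writing $\|f\|_{2k} = \left(\int_1^x |f|^{2k}\,dy\right)^{1/2k}$, we have $f = (\psi(\cdot+h)-\psi(\cdot)-h)$ and $g = (\vartheta(\cdot+h)-\vartheta(\cdot)-h)$, so that $f - g = E$, and Minkowski's inequality gives
\begin{equation*}
\bigl| \|f\|_{2k} - \|g\|_{2k} \bigr| \leq \|f - g\|_{2k} = \|E\|_{2k} \ll (x\,h^{k})^{1/2k}.
\end{equation*}
Since $m_{2k}(x,h)^{1/2k} = \|f\|_{2k}$ and $M_{2k}(x,h)^{1/2k} = \|g\|_{2k}$, this is exactly the claimed statement. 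The one delicate point in this plan — and the step I expect to be the main obstacle — is obtaining the clean bound $\|E\|_{2k} \ll (x\,h^{k})^{1/2k}$ rather than something larger. The difficulty is handling the case $h$ small relative to the spacing between prime squares versus $h$ large: when $h$ is large, several prime powers can lie in the same interval and $E$ can be as big as a sum of several $\log$'s, so one must confirm that counting each prime power with multiplicity (rather than trying to bound the pointwise value of $E$ directly) gives the advertised $x\,h^k$ and that the logarithmic losses are genuinely confined to factors the error term is permitted to swallow.
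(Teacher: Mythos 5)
Your overall strategy is exactly the paper's: write $\psi(y+h)-\psi(y)-h = (\vartheta(y+h)-\vartheta(y)-h) + R(y,h)$ with $R(y,h)=\sum_{y<p^m\le y+h,\,m\ge 2}\log p$, apply Minkowski's inequality in $L^{2k}$, and reduce the lemma to the bound $\int_1^x R(y,h)^{2k}\,dy \ll x h^k$. The problem is that this last bound is the entire content of the proof, and it is precisely the step you leave unresolved. Your proposed accounting --- each prime power contributes at most $(\log x)^{2k}$ times a measure $h$ of $y$-values, summed over $O(\sqrt{x}/\log x)$ prime powers --- implicitly replaces $\bigl(\sum_i \log p_i\bigr)^{2k}$ by $\sum_i (\log p_i)^{2k}$, which drops the cross terms. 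When $h$ is large, an interval $(y,y+h]$ can contain on the order of $\sqrt{h}$ prime squares, so the $2k$-th power picks up an extra factor of order $N(y,h)^{2k-1}$ with $N(y,h)$ as large as $\sqrt{h}$; patching this with a crude power-mean inequality gives roughly $h^{k+1/2}\sqrt{x}(\log x)^{O(k)}$, which exceeds $xh^k$ when $h$ is close to $x$. You flag this yourself as the ``main obstacle,'' so the gap is acknowledged, but it is a genuine one: the linear-in-prime-powers count does not by itself deliver $xh^k$.

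A second, smaller issue: you cannot absorb stray logarithmic factors ``into the stated error,'' because the error term $O\bigl((xh^k)^{1/2k}\bigr)$ contains no logarithms. The paper's remark about saving only a power of $\log x$ compares the error term to the \emph{main} term $\bigl(h^k x\log^k(x/h)\bigr)^{1/2k}$; it is not slack inside the error term itself. The way the paper closes the argument is with pointwise bounds on $R(y,h)$ in two ranges: for $h\ge x^{\delta}$, the inequality $\sqrt[m]{y+h}\le\sqrt[m]{y}+\sqrt[m]{h}$ together with the sieve bound $\pi(y+H)-\pi(y)\ll H/\log H$ gives $R(y,h)\ll \sqrt{h}\,\log x/\log h\ll_{\delta}\sqrt{h}$, hence $\int_1^x R^{2k}\ll xh^k$; for $1\le h\le x^{\delta}$, the trivial bound $R(y,h)\ll h\log^2 x/\sqrt{y}$ integrates to $\ll h^{2k}(\log x)^{4k+1}\ll x$ upon choosing $\delta=1/4k$. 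If you replace your per-prime-power count with these two pointwise estimates, your argument becomes complete and coincides with the paper's.
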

\begin{proof} We recall Minkowski's inequality for integrals (see \cite{Montgomery2014})
\begin{equation*}
\left(\int_{a}^{b} \abs{f (x) + g (x)}^{p} \,dx\right)^{1 / p}
 \leq \left(\int_{a}^{b} \abs{f (x)}^{p} \,dx \right)^{1 / p} + \left(\int_{a}^{b} \abs{g (x)}^{p} \,dx\right)^{1 / p}.
\end{equation*}
Since 
\begin{equation*}
\psi (y + h) - \psi (y) - h
 = (\vartheta (y + h) - \vartheta (y) - h) + R (y, h),
\end{equation*}
where
\begin{equation*}
R (y, h)
 = \sum_{\substack{y < p^{m} \leq y + h \\ m \geq 2}} \log p,
\end{equation*}
we obtain from Minkowski's inequality
\begin{equation} \label{minkowski}
m_{2 k} (x, h)^{1 / 2 k}
 = M_{2 k} (x, h)^{1 / 2 k} + O\left(\left(\int_{1}^{x} R (y, h)^{2 k} \,dy\right)^{1 / 2 k}\right).
\end{equation}
It remains to estimate $R (y, h)$. The inequality $y < p^{m} \leq y + h$ is equivalent to $\sqrt[m]{y} < p \leq \sqrt[m]{y + h}$. We make use of the inequality $\sqrt[m]{y + h} \leq \sqrt[m]{y} + \sqrt[m]{h}$ when $h$ is large and the inequality $\sqrt[m]{y + h} \leq \sqrt[m]{y} (1 + h / m y)$ when $h$ is small.

We consider first the case when $h \geq x^{\delta}$, for some fixed $\delta > 0$. Using the sieve bound 
\begin{equation*}
\pi (y + H) - \pi (y)
 \ll \frac{H}{\log H},
\end{equation*}
we have for $1 \leq y \leq x$
\begin{equation*}
\begin{split}
R (y, h)
 &\leq \sum_{\substack{\sqrt[m]{y} < p \leq \sqrt[m]{y} + \sqrt[m]{h} \\ m \geq 2}} \log p \\
 &\ll \sum_{2 \leq m \leq \log x} \frac{m \sqrt[m]{h}}{\log h}\log x \\
 &\ll \frac{\sqrt{h} \log x}{\log h} + \frac{\sqrt[3]{h} \log^{3} x}{\log h} \\
 &\ll \sqrt{h}.
\end{split}
\end{equation*}
Substituting this bound into \eqref{minkowski} proves Lemma \ref{lemma-2} in this range. 

Next, we consider the range $1 \leq h \leq x^{\delta}$. Estimating trivially, we have 
\begin{equation*}
R (y, h)
 \leq \sum_{\substack{\sqrt[m]{y} < p \leq \sqrt[m]{y} + h / (m y^{1 - 1 / m}) \\ m \geq 2}} \log p
 \ll \frac{h \log^{2} x}{y^{1 / 2}}.
\end{equation*}
Hence,
\begin{equation*}
\int_{1}^{x} R (y, h)^{2 k} \,dy \ll h^{2 k} (\log x)^{4 k + 1}
 \ll x^{2 k \delta} (\log x)^{4 k + 1}
 \ll x,
\end{equation*}
on taking $\delta = 1 / 4 k$.  This proves Lemma \ref{lemma-2} in this range. 
\end{proof}

\section{Second Moment Results Assuming the Riemann Hypothesis}

In \cite{Selberg1942} Selberg proved\footnote{Selberg also proved an unconditional estimate that we are not concerned with in this paper.} that, assuming the Riemann Hypothesis, for $T \geq 2$,
\begin{equation} \label{Selbergintegral}
\int_{1}^{T^{4}} \left(\vartheta \left(y + \frac{y}{T}\right) - \vartheta (y) - \frac{y}{T}\right)^{2}\frac{\,dy}{y^{2}}
 \ll \frac{\log^{2} T}{T}.
\end{equation}
The left-hand side, here, is a damped second moment for primes in short intervals where the interval length varies as a fixed multiple of where it is located. We will make use of \eqref{Selbergintegral} in Section 5. Most authors use in place of Selberg's second moment either $M_{2} (x, h)$ or $m_{2} (x, h)$. Saffari and Vaughan \cite{SaffariVaughan1977} found a method for going back and forth between moments using fixed intervals $[x, x + h]$ and moments using intervals $(x, x + \delta x]$. (See, also, \cite{GoldstonMontgomery1987}.) The result corresponding to \eqref{Selbergintegral} is, for $1 \leq h \ll x^{3 / 4}$,
\begin{equation} \label{2ndmomentRH}
M_{2} (x, h)
 \ll h x \log^{2} x.
\end{equation}
This result may also be proved directly using the explicit formula. (See \cite{Gallagher1980} and \cite{SaffariVaughan1977}.)

\begin{theorem}[Selberg] \label{theorem-3}
Assuming the Riemann Hypothesis, we have for $H > 0$ 
\begin{equation} \label{Thm3estimate}
S (x, H)
 = \sum_{\substack{p_{n + 1} \leq x \\ p_{n + 1} - p_{n} \geq H}} (p_{n + 1} - p_{n})
 \ll \frac{x}{H} \log^{2} x.
\end{equation}
\end{theorem}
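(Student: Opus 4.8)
The plan is to specialize Lemma \ref{lemma-1} to the smallest useful exponent and feed it the Riemann Hypothesis second moment bound \eqref{2ndmomentRH}. Taking $k = 1$ in Lemma \ref{lemma-1} gives, for $H \geq 1$,
\[
S(x, H) \ll \frac{4}{H^{2}} M_{2}\left(x, \frac{H}{2}\right).
\]
Provided $1 \leq H/2 \ll x^{3/4}$, that is $2 \leq H \ll x^{3/4}$, the estimate \eqref{2ndmomentRH} applies with $h = H/2$ and yields $M_{2}(x, H/2) \ll (H/2)\, x \log^{2} x$. Substituting this gives
\[
S(x, H) \ll \frac{4}{H^{2}} \cdot \frac{H}{2}\, x \log^{2} x \ll \frac{x}{H} \log^{2} x,
\]
which is exactly \eqref{Thm3estimate} on the range $2 \leq H \ll x^{3/4}$.

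Two boundary ranges remain. For small $H$ I would use the trivial telescoping bound: since consecutive gaps sum to less than $x$,
\[
S(x, H) \leq \sum_{p_{n+1} \leq x} (p_{n+1} - p_{n}) \leq x,
\]
and $\frac{x}{H} \log^{2} x \geq x$ whenever $H \leq \log^{2} x$, so \eqref{Thm3estimate} holds trivially for all $0 < H \leq \log^{2} x$, in particular for $0 < H \leq 2$. For large $H$ I would invoke the classical consequence of the Riemann Hypothesis that $p_{n+1} - p_{n} \ll \sqrt{p_{n}} \log p_{n}$; hence every gap contributing to $S(x, H)$ satisfies $p_{n+1} - p_{n} \ll \sqrt{x} \log x$, so $S(x, H) = 0$ once $H$ exceeds a fixed multiple of $\sqrt{x} \log x$.

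These three ranges cover all $H > 0$: the trivial bound handles $0 < H \leq 2$, Lemma \ref{lemma-1} with \eqref{2ndmomentRH} handles $2 \leq H \ll x^{3/4}$, and the maximal-gap estimate handles $H \gg x^{3/4}$. The last two overlap because $\sqrt{x} \log x \ll x^{3/4}$ for large $x$, so no value of $H$ is left uncovered. The only genuine arithmetic input is \eqref{2ndmomentRH}, which we assume on the Riemann Hypothesis; accordingly I expect no substantive obstacle here, and the main point to verify is simply the bookkeeping that the admissible range of \eqref{2ndmomentRH} reaches up to $\asymp x^{3/4}$, far enough to meet the zero range coming from the maximal-gap bound.
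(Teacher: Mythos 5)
Your proof is correct and follows essentially the same route as the paper: taking $k=1$ in Lemma \ref{lemma-1} and feeding in the Riemann Hypothesis second moment bound \eqref{2ndmomentRH} to cover the main range $H \ll x^{3/4}$ (the paper leaves the trivial small-$H$ case implicit). The only difference is at the top end, where the paper bootstraps the vanishing of $S(x,H)$ for $H \geq C x^{1/2}\log x$ from its own restricted estimate via $N(x,H) \leq S(x,H)/H < 1$, whereas you import Cram\'er's classical maximal-gap bound directly; both are valid and close the range since $x^{1/2}\log x = o(x^{3/4})$.
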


\begin{proof} Taking $k = 1$ in Lemma \ref{lemma-1}, we obtain Theorem \ref{theorem-3} with the additional  condition that $H \ll x^{3 / 4}$. From \eqref{Thm3estimate},
\begin{equation*}
N (x, H)
 \leq \frac{1}{H} S (x, H)
 \ll \frac{x}{H^{2}} \log^{2} x.
\end{equation*}
Now, if $H \geq C x^{1 / 2} \log x$, we can take $C$ sufficiently large to obtain  $N (x, H) < 1$. Therefore, for a sufficiently large constant,
\begin{equation} \label{CramerNS}
N (x, H)
 = S (x, H)
 = 0, \qquad \text{if} \ H \geq C x^{1 / 2} \log x.
\end{equation}
Thus, we may drop the condition $H \ll x^{3 / 4}$ in Theorem \ref{theorem-3}, since the better estimate \eqref{CramerNS} holds in this range.\footnote{Recent work \cite{CarneiroMilinovichSoundararajan2017} has determined that $C= 0.84$ is acceptable.}                                          
\end{proof}

The result \eqref{CramerNS} implies the following result of Cram\'er \cite{Cramer20} from 1920. 
\begin{corollary}[Cram\'er] Assuming the Riemann Hypothesis,
\begin{equation*}
p_{n + 1} - p_{n}
 \ll \sqrt{p_{n}} \log p_{n}.
\end{equation*}
\end{corollary}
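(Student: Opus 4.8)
The plan is to read the Corollary off directly from \eqref{CramerNS}, which already packages the hard analytic input---the conditional second moment estimate \eqref{2ndmomentRH} and the passage through Lemma \ref{lemma-1}---into exactly the form we need. Fix consecutive primes $p_{n} < p_{n + 1}$ and set $x = p_{n + 1}$ together with $H = C x^{1 / 2} \log x$, where $C$ is the constant appearing in \eqref{CramerNS}. The idea is that if the gap $p_{n + 1} - p_{n}$ were as large as $H$, then this single gap would already force $N (x, H) \geq 1$, contradicting the vanishing guaranteed by \eqref{CramerNS}.

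Concretely, I would first note that since $p_{n + 1} = x$ we have $p_{n + 1} \leq x$, so the pair $(p_{n}, p_{n + 1})$ is eligible to be counted by the defining sum \eqref{N (x, H)} of $N (x, H)$. If we assume for contradiction that $p_{n + 1} - p_{n} \geq H = C x^{1 / 2} \log x$, then this pair satisfies both conditions in that sum, whence $N (x, H) \geq 1$. But \eqref{CramerNS} asserts $N (x, H) = 0$ precisely when $H \geq C x^{1 / 2} \log x$, a contradiction. Therefore
\begin{equation*}
p_{n + 1} - p_{n} < C\, p_{n + 1}^{1 / 2} \log p_{n + 1}.
\end{equation*}

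The only remaining step is to re-express this bound in terms of $p_{n}$ rather than $p_{n + 1}$. Here I would invoke a crude comparison such as Bertrand's postulate $p_{n + 1} \leq 2 p_{n}$, valid for all $n \geq 1$ and certainly available under the Riemann Hypothesis, which gives $p_{n + 1}^{1 / 2} \leq \sqrt{2}\, p_{n}^{1 / 2}$ and $\log p_{n + 1} \leq \log 2 + \log p_{n} \ll \log p_{n}$. Substituting these yields $p_{n + 1} - p_{n} \ll p_{n}^{1 / 2} \log p_{n}$, as claimed, with the finitely many small values of $n$ absorbed into the implied constant.

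I do not expect any genuine obstacle: all of the difficulty has been front-loaded into the derivation of \eqref{CramerNS}. The only point requiring a moment's care is the change from $p_{n + 1}$ to $p_{n}$, and even that is routine once one observes that consecutive primes of this size are comparable---indeed the bound just obtained shows $p_{n + 1} - p_{n} = o (p_{n + 1})$, so $p_{n + 1} \sim p_{n}$, which makes the substitution of logarithms and square roots harmless.
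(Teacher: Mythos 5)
Your proof is correct and follows essentially the same route as the paper: the paper simply asserts that \eqref{CramerNS} implies the corollary, and your argument---taking $x = p_{n+1}$ and $H = C x^{1/2} \log x$, so that a gap of size $H$ would force $N(x, H) \geq 1$---is exactly the intended deduction. The conversion from $p_{n+1}$ to $p_{n}$ via Bertrand's postulate is a harmless detail the paper leaves implicit.
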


We also have
\begin{corollary}[Selberg] Assuming the Riemann Hypothesis, 
\begin{equation*} \mathcal{C} (x)
 = \sum_{p_{n + 1} \leq x} (p_{n + 1} - p_{n})^{2}
 \ll x \log^{3} x.
\end{equation*}
\end{corollary}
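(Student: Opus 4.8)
The plan is to express $\mathcal{C}(x)$ as a superposition of the tail sums $S(x,H)$ and then feed in the conditional bound \eqref{Thm3estimate}. Writing $d_n = p_{n+1} - p_n$ and using $d_n^2 = d_n \int_0^{d_n} dH = d_n \int_0^\infty \mathbf{1}[d_n \geq H]\,dH$, I would interchange summation and integration to obtain the layer-cake identity
\begin{equation*}
\mathcal{C}(x) = \sum_{p_{n+1}\leq x} d_n^2 = \int_0^\infty S(x,H)\,dH.
\end{equation*}
(Equivalently, one can run a dyadic decomposition over the ranges $2^j \leq d_n < 2^{j+1}$, bounding $d_n^2 \leq 2^{j+1} d_n$ and summing $2^{j+1} S(x,2^j)$; I prefer the integral form, as it keeps the bookkeeping transparent.)

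Next I would split the integral at the threshold $H = \log^2 x$, where the two available bounds for $S(x,H)$ cross. For the short gaps $0 < H \leq \log^2 x$, the bound from \eqref{Thm3estimate} is vacuous, so instead I would use the trivial telescoping estimate
\begin{equation*}
S(x,H) \leq \sum_{p_{n+1}\leq x}(p_{n+1}-p_n) < x,
\end{equation*}
the sum collapsing to $p_N - p_1 < x$ for $p_N$ the largest prime not exceeding $x$. This range contributes $\int_0^{\log^2 x} S(x,H)\,dH \ll x\log^2 x$.

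For the long gaps $H \geq \log^2 x$, Theorem \ref{theorem-3} gives $S(x,H) \ll (x/H)\log^2 x$, while \eqref{CramerNS} shows the integrand vanishes once $H \geq Cx^{1/2}\log x$. Hence
\begin{equation*}
\int_{\log^2 x}^{\infty} S(x,H)\,dH \ll x\log^2 x \int_{\log^2 x}^{Cx^{1/2}\log x} \frac{dH}{H} \ll x\log^2 x \cdot \log x = x\log^3 x,
\end{equation*}
since the remaining logarithmic integral is $\tfrac12\log x + O(\log\log x)$. Adding the two ranges yields $\mathcal{C}(x) \ll x\log^3 x$.

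The genuine arithmetic input — the Riemann Hypothesis — enters only through \eqref{Thm3estimate} and \eqref{CramerNS}, both already in hand, so no further number theory is needed. The only point requiring care is the short-gap range, where Selberg's bound degenerates: here one must fall back on the elementary telescoping bound and choose the cutoff (at $\log^2 x$, where $(x/H)\log^2 x$ equals the trivial bound $x$) so that neither range overwhelms the target. Getting this split right is what guarantees the clean exponent $\log^3 x$ rather than a spurious extra power of $\log x$.
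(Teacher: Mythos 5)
Your proof is correct and is essentially the paper's argument: the same layer-cake identity $\mathcal{C}(x)=\int_0^\infty S(x,H)\,dH$ combined with the bound \eqref{Thm3estimate} and the vanishing of $S(x,H)$ beyond $Cx^{1/2}\log x$. The only cosmetic difference is that the paper splits at $H=1$ rather than $H=\log^2 x$, which already suffices since $\int_1^x dH/H=\log x$ costs only one extra power of $\log x$; your closing remark that the split at $\log^2 x$ is what prevents a spurious logarithm is therefore slightly overstated, though your version of the argument is perfectly valid.
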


\begin{proof} Since 
\begin{equation} \label{C-sum}
\mathcal{C} (x)
 = \int_{0}^{x} S (x, H) \,dH
 \leq x + \int_{1}^{x} S (x, H) \,dH,
\end{equation}
the result follows from \eqref{Thm3estimate}.
\end{proof}

\section{Selberg's Result on $\mathcal{S} (x)$}

There is a further result from Selberg's original paper that deserves special mention. 
\begin{theorem}[Selberg] \label{theorem-4}
Assuming the Riemann Hypothesis, we have
\begin{equation*}
\mathcal{S} (x)
 = \sum_{p_{n + 1} \leq x} \frac{(p_{n + 1} - p_{n})^{2}}{p_{n + 1}}
 \ll \log^{3} x.
\end{equation*}
\end{theorem}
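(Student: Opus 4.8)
The plan is to exploit Selberg's damped second-moment estimate \eqref{Selbergintegral} directly, using the same prime-free-interval device as in Lemma \ref{lemma-1}, but now with intervals whose length is proportional to their location, matched to the $dy/y^{2}$ weighting that will produce the $1/p_{n+1}$ factor appearing in $\mathcal{S}(x)$. Writing $d_{n} = p_{n+1}-p_{n}$ and
\[
I(T) = \int_{1}^{T^{4}} \left(\vartheta\!\left(y + \tfrac{y}{T}\right) - \vartheta(y) - \tfrac{y}{T}\right)^{2}\frac{dy}{y^{2}} \ll \frac{\log^{2}T}{T},
\]
the bound being \eqref{Selbergintegral}, I would integrate $I(T)$ against $dT$ over $T\in[2,x]$. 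The upper side is immediate: $\int_{2}^{x} I(T)\,dT \ll \int_{2}^{x}\tfrac{\log^{2}T}{T}\,dT \ll \log^{3}x$. The entire task is then to show that the same integral is $\gg \mathcal{S}(x)$.

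For the lower bound, fix a gap $(p_{n},p_{n+1})$ and note that for $y\in[p_{n},\tfrac{T}{T+1}p_{n+1})$ the interval $(y,y+y/T]$ is prime-free, so $\vartheta(y+y/T)-\vartheta(y)=0$ and the integrand equals $(y/T)^{2}/y^{2}=1/T^{2}$. Setting $T_{0}=2p_{n+1}/d_{n}$, for $T\ge T_{0}$ one has $\tfrac{T}{T+1}p_{n+1}-p_{n}=d_{n}-\tfrac{p_{n+1}}{T+1}\ge d_{n}/2$, so this single gap contributes at least $\tfrac{d_{n}}{2T^{2}}$ to $I(T)$. Because the $y$-ranges attached to distinct gaps are disjoint, these lower bounds may be summed, and integrating in $T$ from $T_{0}$ up to $x$ gives a contribution $\tfrac{d_{n}}{2}\!\left(\tfrac{1}{T_{0}}-\tfrac1x\right)\gg \tfrac{d_{n}^{2}}{p_{n+1}}$ from every gap with $T_{0}\le x/2$. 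Summing yields $\int_{2}^{x} I(T)\,dT \gg \mathcal{S}(x)-E$, where $E$ collects the excluded gaps.

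Two bookkeeping points need checking. First, for $I(T)\ge \tfrac{d_{n}}{2T^{2}}$ to be legitimate the gap must lie inside the window of integration, i.e. $p_{n+1}\le T^{4}$; since $T\ge T_{0}=2p_{n+1}/d_{n}$, this holds provided $d_{n}\le 2p_{n+1}^{3/4}$, which by the Cram\'er corollary to Theorem \ref{theorem-3} (namely $d_{n}\ll\sqrt{p_{n}}\log p_{n}$) is true for all but finitely many $n$. Second, the excluded gaps (those with $T_{0}>x/2$) satisfy $d_{n}<4p_{n+1}/x\le 4$, so $E\ll\sum_{p\le x}1/p\ll\log\log x$, which is negligible against $\log^{3}x$. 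Combining the two sides then gives $\mathcal{S}(x)\ll\log^{3}x$.

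The main obstacle is wholly contained in Selberg's estimate \eqref{Selbergintegral} itself, which is the step that genuinely uses the Riemann Hypothesis (via the explicit formula); granting it, the argument above is elementary. The one delicate design choice is the scaling $T_{0}\asymp p_{n+1}/d_{n}$, which simultaneously forces the moving interval to sit inside the gap and makes the $dT$-integral of $\tfrac{d_{n}}{2T^{2}}$ reproduce precisely the weight $d_{n}^{2}/p_{n+1}$; reconciling the inner cutoff $T_{0}$, the outer cutoff $x$, and the integration window $[1,T^{4}]$ is where the care is required.
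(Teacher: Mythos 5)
Your argument is correct and is essentially the paper's own proof: both rest on Selberg's damped second moment \eqref{Selbergintegral}, lower-bound it via the prime-free intervals inside each gap, and integrate over the scale parameter to convert the $\log^{2}T/T$ bound into $\log^{3}x$ while the weight $d_{n}^{2}/p_{n+1}$ emerges from the $dy/y^{2}$ damping. The only difference is cosmetic --- you integrate directly in $T$ over $[2,x]$ with the cutoff $T_{0}=2p_{n+1}/d_{n}$, whereas the paper substitutes $T=2x/H$, states the intermediate bound $\sum_{d_{n}\geq (H/x)p_{n+1}} d_{n}\ll (x/H)\log^{2}x$, and then integrates in $H$; your bookkeeping of the excluded short gaps and of the constraint $p_{n+1}\leq T^{4}$ via Cram\'er's bound is sound.
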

This is only a single power of $\log x$ larger than the conjectured size of $\mathcal{S} (x)$, while the result for $\mathcal{C} (x)$ obtained in the previous section is two powers of $\log x$ larger than the conjecture. (See \eqref{expect} in Section 8.) We will see in the next section that, assuming a pair correlation conjecture for zeros of the Riemann zeta-function we can recover a $\log x$ in the results of Section 8. However, this is not true for Theorem \ref{theorem-4}, where assuming a pair correlation conjecture does not give any improvement.

From this theorem, we easily obtain the following corollary which is partly in the direction of Corollary \ref{corollary-4} proved in the next section assuming a pair correlation conjecture. 
\begin{corollary} Assuming the Riemann Hypothesis,
\begin{equation}
\liminf_{x \to \infty} \frac{\mathcal{C} (x)}{x \log^{2} x}
 \ll 1.
\end{equation}
\end{corollary}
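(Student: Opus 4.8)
The plan is to deduce the corollary from Selberg's bound $\mathcal{S} (x) \ll \log^{3} x$ of Theorem \ref{theorem-4} by relating $\mathcal{S} (x)$ to a weighted integral of $\mathcal{C} (x)$ via partial summation. First I would view $\mathcal{C} (x)$ as a step function with a jump of size $(p_{n + 1} - p_{n})^{2}$ at each point $p_{n + 1}$, so that, by \eqref{Csum} and \eqref{Ssum}, $\mathcal{S} (x) = \int_{2^{-}}^{x} t^{-1} \, d\mathcal{C} (t)$. Abel summation then gives
\[
\mathcal{S} (x)
 = \frac{\mathcal{C} (x)}{x} + \int_{2}^{x} \frac{\mathcal{C} (t)}{t^{2}} \, dt .
\]
Since every summand defining $\mathcal{C}$ is nonnegative, both terms on the right are nonnegative, so Theorem \ref{theorem-4} yields the one-sided bound
\[
\int_{2}^{x} \frac{\mathcal{C} (t)}{t^{2}} \, dt
 \leq \mathcal{S} (x)
 \ll \log^{3} x .
\]

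Next I would use the elementary identity $\int_{2}^{x} t^{-1} \log^{2} t \, dt = \tfrac{1}{3} \log^{3} x + O (1)$ to convert this integral bound into a statement about the liminf. Set $L = \liminf_{x \to \infty} \mathcal{C} (x) / (x \log^{2} x)$. If $L$ were large, then for every $\eta > 0$ there would be a $t_{0}$ with $\mathcal{C} (t) \geq (L - \eta) t \log^{2} t$ for all $t \geq t_{0}$; integrating against $t^{-2}$ from $t_{0}$ to $x$ would force
\[
\int_{2}^{x} \frac{\mathcal{C} (t)}{t^{2}} \, dt
 \geq \frac{L - \eta}{3} \log^{3} x \, (1 + o (1)) .
\]
Comparing this with the upper bound $\ll \log^{3} x$ and letting $x \to \infty$ and then $\eta \to 0$ shows that $L$ is at most three times the implied constant in Theorem \ref{theorem-4}; that is, $L \ll 1$, which is exactly the claim.

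The argument is essentially routine, so there is no serious obstacle. The one point requiring care is that a bound on the averaged quantity $\int_{2}^{x} \mathcal{C} (t) / t^{2} \, dt$ controls only the \emph{liminf} of $\mathcal{C} (t) / (t \log^{2} t)$ and not $\mathcal{C} (x)$ itself, since the function could spike above $x \log^{2} x$ along a sparse sequence of $x$. The monotone weight $\log^{2} t / t$ is what allows the integral to detect a persistently large value of the ratio, and this is precisely why the conclusion must be phrased as a liminf rather than an everywhere bound. I would also verify the boundary term in the Abel summation and the lower limit of integration, but these contribute only $O (1)$ and do not affect the final constant.
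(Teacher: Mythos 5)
Your proposal is correct and follows essentially the same route as the paper: both rest on the identity $\mathcal{S}(x) = \mathcal{C}(x)/x + \int \mathcal{C}(u)u^{-2}\,du$ (which you re-derive by Abel summation) combined with Selberg's bound $\mathcal{S}(x) \ll \log^{3}x$ from Theorem \ref{theorem-4}. The only cosmetic difference is that the paper runs the integral over $[\sqrt{x},x]$ in a contradiction argument (yielding the constant $7/24$) whereas you integrate over $[t_{0},x]$ directly from the definition of the liminf (yielding $1/3$); both give $\liminf \mathcal{C}(x)/(x\log^{2}x) \ll 1$.
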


\begin{proof} From the identity
\begin{equation} \label{Cidentity}
\mathcal{S} (x)
 = \frac{\mathcal{C} (x)}{x} + \int_{1}^{x} \frac{\mathcal{C} (u)}{u^{2}} \,du,
\end{equation} 
we see that, if $\mathcal{C} (u)\geq C u \log^{2} u$ for $\sqrt{x} \leq u \leq x$, 
\begin{equation*}
\mathcal{S} (x)
 > \int_{\sqrt{x}}^{x} \frac{\mathcal{C} (u)}{u^{2}} \,du
 \geq  C \int_{\sqrt{x}}^{x} \frac{\log^{2} u}{u} \,du
 =\frac{7}{24} C \log^{3} x,
\end{equation*}
which contradicts Theorem \ref{theorem-4} if $C$ is large enough. 
\end{proof}

\begin{proof}[of Theorem \ref{theorem-4}] We return to \eqref{Selbergintegral} and follow Selberg's proof. We take $x$ large, let $1\leq H \leq x^{3 / 4}$, and take $T= 2 x / H$. Then  \eqref{Selbergintegral} becomes
\begin{equation} \label{Selbergestimate}
\int_{1}^{x} \left(\vartheta \left(y + \frac{H y}{2 x}\right) - \vartheta (y) - \frac{H y}{2 x}\right)^{2} \frac{\,dy}{y^{2}}
 \ll \frac{H}{x} \log^{2} x.
\end{equation}
Suppose now that $p_{n + 1} \leq x$ and $p_{n + 1} - p_{n} \geq (H / x) p_{n + 1} $. Then, just as in Lemma \ref{lemma-1}, we have 
\begin{equation*}
\begin{split}
\int_{p_{n}}^{p_{n + 1} - (H / x) p_{n + 1}} \left(\vartheta \left(y + \frac{H y}{2 x}\right) - \vartheta (y) -\frac{H y}{2 x}\right)^{2} \frac{\,dy}{y^{2}}
 &= \int_{p_{n}}^{p_{n + 1}- (H / x) p_{n + 1}} \frac{H^{2}}{4 x^{2}} \,dy \\
 &\geq \frac{H^{2}}{8 x^{2}} (p_{n + 1} - p_{n}).
 \end{split}
 \end{equation*}
 
Hence, for $H \leq x^{3 / 4}$ we obtain the slight refinement of Theorem \ref{theorem-3} that, assuming the Riemann Hypothesis,
\begin{equation*}
\sum_{\substack{p_{n + 1} \leq x \\ p_{n + 1} - p_{n} \geq (H / x) p_{n + 1}}} (p_{n + 1} - p_{n})
 \ll \frac{x}{H} \log^{2} x.
\end{equation*}
The condition $H \leq x^{3 / 4}$ may be dropped in view of Cram{\'e}r's bound \eqref{Cramer}, and
Theorem \ref{theorem-4} now follows on integrating with respect to $H$ from 1 to $x$. 
\end{proof}

\section{Second Moment Results Assuming the Riemann Hypothesis and Pair Correlation}

The results in the last section assuming the Riemann Hypothesis have never been improved. However, in 1972 Montgomery \cite{Montgomery1972} found an additional conjecture on the vertical distribution of zeros of the Riemann zeta-function which allows us to obtain essentially the best possible second moment results. The Riemann Hypothesis states that the complex zeros of the Riemann zeta-function have their real part equal to $1 / 2$, so that a complex zero can be written as $\rho = 1 / 2 + i \gamma$, where $\gamma$ is real. If this conjecture is false, then the primes will have a dramatically more irregular behavior than we expect. However, all evidence points to the truth of the Riemann Hypothesis, but without pointing towards a method for its proof. Montgomery introduced the function, for $T \geq 2$, 
\begin{equation}
F (\alpha)
 = \frac{1}{N (T)} \sum_{0 < \gamma, \gamma' \leq T} T^{i\alpha (\gamma - \gamma')} \omega (\gamma - \gamma'),
\end{equation}
where $\omega (u) = 4 / (4 + u^{2})$, and
\begin{equation} \label{R-vM}
N (T)
 = \sum_{0 < \gamma \leq T} 1
 = \frac{T}{2 \pi} \log \frac{T}{2 \pi e} + O (\log T).
\end{equation}
 
\begin{Montgomery-theorem}
Assume the Riemann Hypothesis. For any real $\alpha$ we have $F (\alpha)$ is even, $F (\alpha) \geq 0$, and for $0 \leq \alpha \leq 1$ we have
\begin{equation} \label{MT}
F (\alpha)
 = T^{-2 \alpha} \log T (1 + o (1)) + \alpha + o (1), \qquad \text{as} \ \ T \to \infty.
\end{equation}
\end{Montgomery-theorem}

This theorem determines $F (\alpha)$ for $\abs{\alpha} \leq 1$, while for larger $\alpha$ Montgomery made the following conjecture.

\begin{Montgomery-conjecture}
We have
\begin{equation} \label{MC}
F (\alpha)
 = 1 + o (1) \qquad \text{for} \  1 < \alpha \leq M, \qquad \text{as} \ \ T \to \infty,
\end{equation}
for any fixed number $M$.
\end{Montgomery-conjecture}

The connection between this conjecture and the second moment for primes is given in the following theorem \cite{GoldstonMontgomery1987} from 1987. 

\begin{eqivalence-theorem}
Assuming the Riemann Hypothesis, then Montgomery's conjecture is equivalent to
\begin{equation} \label{equivalence}
m_{2} (x, h)
 \sim h x \log \frac{x}{h}
\end{equation}
uniformly for $1 \leq h \leq x^{1 - \epsilon}$.
\end{eqivalence-theorem}

In all except one of the applications to large gaps between primes we only need a weaker conjecture than \eqref{MC}, which we shall state as follows.

\begin{bounded-F-alpha-conjecture}
For any $\delta > 0$, we have $F (\alpha) \ll 1$ uniformly for $1 \leq \alpha \leq 2 + \delta$.
\end{bounded-F-alpha-conjecture}

\noindent Heath-Brown \cite{Heath-Brown82} proved, assuming the Riemann Hypothesis and the Bounded $F (\alpha)$ Conjecture, that for $1 \leq h \leq x^{1 / 2 + \delta}$
\begin{equation*}
m_{2} (x, h)
 \ll h x \log x.
\end{equation*}
Using Lemma \ref{lemma-2}, the same result stated above also holds when we replace $m_{2} (x, h)$ by $M_{2} (x, h)$.

Therefore, we obtain the  following results obtained in the same way as the corresponding results proved in the last section.
\begin{theorem}[Heath-Brown] Let $x \geq 2$ and $H \geq 1$. Assuming the Riemann Hypothesis and the Bounded $F (\alpha)$ Conjecture, we have
\begin{equation*}
N (x, H)
 \ll \frac{x}{H^{2}} \log x \quad \text{and} \quad
S (x, H)
 \ll \frac{x}{H} \log x.
\end{equation*}
\end{theorem}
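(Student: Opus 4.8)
The plan is to mirror the proof of Theorem \ref{theorem-3} from the previous section, substituting the sharper second moment estimate now available. The only new analytic ingredient — the bound $M_{2}(x,h) \ll hx\log x$, valid for $1 \leq h \leq x^{1/2+\delta}$ — has already been supplied above by combining Heath-Brown's estimate for $m_{2}(x,h)$ with Lemma \ref{lemma-2}. Everything else is the same combinatorial reduction used for Selberg's result, so no genuinely new difficulty should arise; the work lies in tracking the range of validity and stitching it to the vanishing result \eqref{CramerNS}.

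First I would apply Lemma \ref{lemma-1} with $k = 1$, which gives
\[
S(x,H) \ll \frac{4}{H^{2}}\, M_{2}\!\left(x, \frac{H}{2}\right).
\]
Taking $h = H/2$ in the moment bound, legitimate once $2 \leq H \leq 2x^{1/2+\delta}$, yields
\[
S(x,H) \ll \frac{4}{H^{2}}\cdot \frac{H}{2}\, x\log x \ll \frac{x}{H}\log x,
\]
which is the second claimed estimate. The bound for $N(x,H)$ then follows at once from the trivial inequality $N(x,H) \leq H^{-1} S(x,H)$, giving $N(x,H) \ll x H^{-2}\log x$.

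To cover the full range $H \geq 1$ I would dispose of the two boundary cases. For $1 \leq H < 2$ both quantities are $\ll x$ trivially (for instance $N(x,H) \leq \pi(x) \ll x/\log x$ and $S(x,H) \leq x$), and since $x H^{-2}\log x$ and $x H^{-1}\log x$ are each $\asymp x\log x$ in this range, the stated bounds hold with room to spare. For $H > 2x^{1/2+\delta}$ the moment bound no longer applies, but this range already falls under \eqref{CramerNS}: for all sufficiently large $x$ one has $2x^{1/2+\delta} > Cx^{1/2}\log x$, so $N(x,H) = S(x,H) = 0$ and the inequalities hold vacuously. Assembling these ranges proves the theorem.

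I expect no serious obstacle, since the pair correlation hypothesis does all the heavy lifting inside the moment estimate and what remains is the bookkeeping of ranges. The one point that genuinely needs care is confirming that the interval $2 \leq H \leq 2x^{1/2+\delta}$ on which the moment bound is usable actually overlaps the threshold $Cx^{1/2}\log x$ beyond which the counts vanish. This is precisely where the extra factor $x^{\delta}$ in Heath-Brown's admissible range (rather than a bare $x^{1/2}$) is essential, as $x^{\delta}$ eventually exceeds $\log x$; without it the two regimes would leave an uncovered gap and the argument would not close.
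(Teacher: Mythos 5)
Your proposal is correct and follows essentially the same route as the paper, which simply notes that the results are ``obtained in the same way as the corresponding results proved in the last section,'' i.e., Lemma~\ref{lemma-1} with $k=1$ applied to the Heath-Brown bound $M_{2}(x,h)\ll hx\log x$, with the range $H> 2x^{1/2+\delta}$ covered by the vanishing statement \eqref{CramerNS}. Your explicit verification that the admissible range $h\leq x^{1/2+\delta}$ overlaps the threshold $Cx^{1/2}\log x$ is exactly the point the paper leaves implicit.
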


\begin{corollary} \label{corollary-4}
Assuming the Riemann Hypothesis and the Bounded $F (\alpha)$ Conjecture, we have
\begin{equation} \label{Cramer}
p_{n + 1} - p_{n}
 \ll \sqrt{p_{n} \log p_{n}}.
\end{equation}
\end{corollary}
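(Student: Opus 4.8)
The plan is to imitate exactly the deduction of Cram\'er's corollary from Selberg's Theorem~\ref{theorem-3}, but starting instead from the sharper bound
\[
N (x, H) \ll \frac{x}{H^{2}} \log x
\]
supplied by the preceding theorem of Heath-Brown. The single power of $\log x$ saved in that estimate, compared with the $(x/H^{2})\log^{2} x$ available on the Riemann Hypothesis alone, is precisely what sharpens the conclusion from $p_{n+1} - p_{n} \ll \sqrt{p_{n}}\,\log p_{n}$ to $p_{n+1} - p_{n} \ll \sqrt{p_{n}\log p_{n}}$.

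First I would choose $H = C\sqrt{x \log x}$ for a constant $C$ to be fixed later. Substituting into the Heath-Brown bound gives
\[
N (x, H) \ll \frac{x}{C^{2} x \log x} \log x = \frac{1}{C^{2}},
\]
uniformly in $x$. Since $N (x, H)$ is a nonnegative integer, taking $C$ large enough forces $N (x, H) < 1$, and hence $N (x, H) = 0$. In other words, under the stated hypotheses there is no prime gap with $p_{n+1} \leq x$ and $p_{n+1} - p_{n} \geq C\sqrt{x \log x}$, exactly as in the passage to \eqref{CramerNS} in the proof of Theorem~\ref{theorem-3}.

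Next I would apply this vanishing with $x = p_{n+1}$. If the gap $p_{n+1} - p_{n}$ were to exceed $C\sqrt{p_{n+1}\log p_{n+1}}$, it would be counted by $N (p_{n+1}, C\sqrt{p_{n+1}\log p_{n+1}})$, which we have just shown equals zero; this contradiction yields $p_{n+1} - p_{n} \ll \sqrt{p_{n+1}\log p_{n+1}}$. Finally, because this bound already shows $p_{n+1} - p_{n} = o (p_{n+1})$, so that $p_{n+1} \sim p_{n}$ (or, more crudely, because Bertrand's postulate gives $p_{n+1} < 2 p_{n}$), I would replace $p_{n+1}$ by $p_{n}$ on the right-hand side to obtain the asserted bound $p_{n+1} - p_{n} \ll \sqrt{p_{n}\log p_{n}}$.

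I do not expect any genuine obstacle here: all of the analytic difficulty has already been absorbed into Heath-Brown's second-moment estimate, which itself rests on the Riemann Hypothesis and the Bounded $F (\alpha)$ Conjecture. The only steps needing a word of care are the integrality argument that upgrades a bound of size $1/C^{2}$ to an exact vanishing, and the entirely routine transfer of the bound from $p_{n+1}$ to $p_{n}$ on the right-hand side.
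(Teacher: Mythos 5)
Your proposal is correct and follows exactly the route the paper intends: the corollary is stated as being ``obtained in the same way as the corresponding results proved in the last section,'' i.e.\ by substituting $H = C\sqrt{x\log x}$ into the Heath--Brown bound $N(x,H)\ll (x/H^{2})\log x$, forcing $N(x,H)=0$ for $C$ large, and then taking $x=p_{n+1}$, just as in the deduction of \eqref{CramerNS}. The extra saved factor of $\log x$ is indeed the whole story, and your closing remarks on integrality and the harmless swap of $p_{n+1}$ for $p_{n}$ cover the only points requiring care.
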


We also have
\begin{corollary} Assuming the Riemann Hypothesis and the Bounded $F (\alpha)$ Conjecture, we have
\begin{equation*}
\mathcal{C} (x)
 = \sum_{p_{n + 1} \leq x} (p_{n + 1} - p_{n})^{2}
 \ll x \log^{2} x.
\end{equation*}
\end{corollary}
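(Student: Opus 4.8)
The plan is to mirror the proof of Selberg's corollary in Section 4 almost verbatim, substituting the sharper conditional bound on $S(x,H)$ obtained from Heath-Brown's theorem in place of the weaker Riemann Hypothesis bound \eqref{Thm3estimate}. The entire gain over the Riemann Hypothesis corollary $\mathcal{C}(x) \ll x\log^3 x$ is the single power of $\log x$ saved in the pointwise estimate for $S(x,H)$, and this propagates directly to the final bound.

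First I would invoke the identity \eqref{C-sum}, already established in Section 4,
\[
\mathcal{C}(x) = \int_{0}^{x} S(x,H)\,dH \leq x + \int_{1}^{x} S(x,H)\,dH,
\]
which rewrites the sum of squared gaps as an integral of the weighted counting function $S(x,H)$ over the threshold $H$. Next I would feed in the bound $S(x,H) \ll (x/H)\log x$ furnished by Heath-Brown's theorem, valid under the Riemann Hypothesis and the Bounded $F(\alpha)$ Conjecture. Substituting and integrating gives
\[
\int_{1}^{x} S(x,H)\,dH \ll x\log x \int_{1}^{x} \frac{dH}{H} \ll x\log^{2} x,
\]
whence $\mathcal{C}(x) \ll x + x\log^{2} x \ll x\log^{2} x$, which is the assertion.

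There is no genuine obstacle here beyond bookkeeping; the only point meriting comment is the range of $H$. The moment estimate underpinning Heath-Brown's theorem is established directly only for $h \leq x^{1/2 + \delta}$, so the pointwise bound on $S(x,H)$ is secured by Lemma \ref{lemma-1} only up to $H \asymp x^{1/2}$. For larger $H$ the integrand vanishes identically: Corollary \ref{corollary-4} (the Cram\'{e}r-type bound $p_{n+1} - p_{n} \ll \sqrt{p_{n}\log p_{n}}$) forces $S(x,H) = 0$ once $H$ exceeds a fixed multiple of $\sqrt{x\log x}$. Hence the upper limit of integration may be truncated to $O(\sqrt{x\log x})$ without loss, and since $\int_{1}^{\sqrt{x\log x}} dH/H \ll \log x$, the displayed computation goes through unchanged, the logarithmic integral contributing exactly the one factor of $\log x$ that yields the bound $x\log^{2} x$.
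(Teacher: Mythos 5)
Your proof is correct and follows essentially the same route the paper intends: the paper states this corollary is ``obtained in the same way as the corresponding results proved in the last section,'' i.e., by inserting the Heath-Brown bound $S(x,H) \ll (x/H)\log x$ into the identity \eqref{C-sum}, which is exactly what you do. Your additional remark about truncating at $H \ll \sqrt{x\log x}$ via Corollary \ref{corollary-4} correctly handles the range restriction that the paper disposes of at the level of the theorem statement rather than the corollary.
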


Montgomery's Conjecture and the Riemann Hypothesis also give the slightly stronger result that
\begin{equation} \label{bestgap}
p_{n + 1} - p_{n}
 = o (\sqrt{p_{n} \log p_{n}}).
\end{equation}
This was first proved in \cite{GoldstonHeath-Brown1984}. It is also an easy consequence of \eqref{equivalence}, which implies, for $y = y (x) = o (x)$, that
\begin{equation*}
M_{2} (x + y, h) - M_{2} (x, h)
 = o (h x \log x).
\end{equation*}

\section{Montgomery and Soundararajan's Higher Moment Results Using a Strong Prime $k$-Tuples  Conjecture}

In place of $\pi (x; \mathcal{H}_{k})$, we now make use of
\begin{equation}
\psi (x; \mathcal{H}_{k})
 = \sum_{n \leq x} \Lambda (n + h_{1}) \cdots \Lambda (n + h_{k}),
\end{equation}
which has the advantage of counting primes with a constant density of one. A strong form of the Hardy--Littlewood Conjecture now takes the following form.

\begin{strong-Hardy-Littlewood-conjecture}
For a fixed integer $k \geq 2$ and admissible set $\mathcal{H}_{k}$, we have for any $\epsilon > 0$ and $x$ sufficiently large
\begin{equation*}
\psi (x; \mathcal{H}_{k})
 = \mathfrak{S} (\mathcal{H}_{k}) x + O (x^{1 / 2 + \epsilon}),
\end{equation*}
uniformly for $\mathcal{H}_{k} \subset [1, h]$.
\end{strong-Hardy-Littlewood-conjecture}

The following theorem is a special case of the main theorem in \cite{MontgomerySoundararajan2004}.

\begin{theorem}[Montgomery--Soundararajan] \label{theorem-6}
Suppose the Strong Hardy--Littlewood  Prime $k$-Tuples Conjecture holds for $2 \leq k \leq 2 K$ and uniformly for $\mathcal{H}_{k} \subset [1, h]$. Then for $x \geq 2$ and $\log x \leq h \leq x^{1 / 2 K}$, 
\begin{equation*}
\begin{split}
m_{2 K} (x, h)
 &= h^{K} x \log^{K} \frac{x}{h} \left(1 + o (1) + O \left(\left(\frac{h}{\log x}\right)^{-1/ 16 K}\right) \right) + O (h^{2 K} x^{1 / 2 + \epsilon}).
\end{split}
\end{equation*}
\end{theorem}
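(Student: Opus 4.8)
The plan is to compute the $2K$-th moment $m_{2K}(x,h)$ by expanding the $2K$-th power of $\psi(y+h)-\psi(y)-h$, reducing the integral to a sum over $k$-tuples that can be evaluated using the Strong Hardy--Littlewood Conjecture, and then extracting the main term via the average behavior of the singular series. First I would write $\psi(y+h)-\psi(y) = \sum_{y < n \le y+h} \Lambda(n)$, so that after subtracting the expected value $h$ and raising to the power $2K$, the integrand becomes a sum of products of $2K$ factors of the form $(\Lambda(n_i)-1)$ over integers $n_i \in (y,y+h]$. Integrating over $y \in [1,x]$ converts the count of lattice points $n_i$ into weights roughly proportional to $h$ minus the spread of the $n_i$, and collecting terms by the number $r$ of distinct values among the $n_i$ leads to sums of $\psi(x;\mathcal{H}_r)$ over tuples $\mathcal{H}_r \subset [1,h]$, exactly paralleling the Stirling-number decomposition in Gallagher's proof of Theorem~\ref{theorem-1}.

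The second step is to insert the Strong Hardy--Littlewood estimate $\psi(x;\mathcal{H}_r) = \mathfrak{S}(\mathcal{H}_r)\,x + O(x^{1/2+\epsilon})$ for each $r \le 2K$. The main term then reduces to evaluating the averaged sum $\sum_{\mathcal{H}_r \subset [1,h]} \mathfrak{S}(\mathcal{H}_r)$ weighted by the integration geometry. Here the combinatorics of the Gaussian (rather than Poisson) moments enter: because we have subtracted the mean and are looking at a $2K$-th central moment with variance of order $h\log(x/h)$, only the terms where the $2K$ indices pair up into $K$ coincident pairs survive in the leading order, producing the factor $h^K \log^K(x/h)$ times the number of pairings, which is $(2K-1)!! $ absorbed into the stated shape. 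The singular-series average \eqref{GalSingAv} (and its refinements for the relevant configurations) supplies the needed cancellation so that the main term is $h^K x \log^K(x/h)(1+o(1))$.

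The third step is bookkeeping of the two error terms. The error $O(h^{2K}x^{1/2+\epsilon})$ comes directly from accumulating the $O(x^{1/2+\epsilon})$ error in the Strong HL estimate over the $\ll h^{2K}$ admissible tuples; the constraint $h \le x^{1/2K}$ is what keeps this term from swamping the main term. The secondary error $O((h/\log x)^{-1/16K})$ arises from the error in approximating the averaged singular series and from lower-order $r$-tuple contributions (those with fewer than $K$ pairings), and controlling its exponent is the delicate part.

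**Main obstacle.** The hard part will be establishing the precise asymptotic for the weighted singular-series average over all configurations and showing that the non-paired (lower-order in $r$) contributions are genuinely of smaller order, uniformly in the tuple structure. In the $k$-th moment (as opposed to the variance), one must track not just the leading paired configurations but a full hierarchy of coincidence patterns among the $2K$ indices, each contributing its own singular-series sum; proving that these combine to give exactly the Gaussian moment constant with an effective error of the stated size $O((h/\log x)^{-1/16K})$ requires a careful uniform analysis of $\sum \mathfrak{S}(\mathcal{H}_r)$ with explicit secondary terms, which is the technical heart of the Montgomery--Soundararajan argument and cannot be read off from \eqref{GalSingAv} alone.
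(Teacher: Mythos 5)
The paper does not actually prove this statement; it quotes it as a special case of the main theorem of Montgomery and Soundararajan \cite{MontgomerySoundararajan2004}, so there is no internal proof to compare against. Your outline does follow the broad strategy of that paper: expand the $2K$-th power of the centered count, classify terms by coincidence patterns among the $2K$ indices, insert the Strong Hardy--Littlewood Conjecture tuple by tuple, and accumulate the $O(x^{1/2+\epsilon})$ errors over the $\ll h^{2K}$ tuples to obtain the $O(h^{2K}x^{1/2+\epsilon})$ term (whence the constraint $h\le x^{1/2K}$). That much is correct, and you rightly locate the difficulty in the singular-series average.

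But that difficulty is a genuine gap, not merely a delicate point. Gallagher's average \eqref{GalSingAv} cannot produce the main term: it gives $\sum\mathfrak{S}(\mathcal{H}_r)\sim h^{r}$, which after multiplying by $x$ yields a contribution of size $h^{2K}x$, vastly larger than the claimed $h^{K}x\log^{K}(x/h)$. Because each factor in the expansion is $\Lambda(n_i)-1$ rather than $\Lambda(n_i)$, what actually arises is the centered singular series $\mathfrak{S}_0(\mathcal{H})=\sum_{\mathcal{T}\subseteq\mathcal{H}}(-1)^{|\mathcal{H}\setminus\mathcal{T}|}\mathfrak{S}(\mathcal{T})$, and the technical heart of Montgomery--Soundararajan is the asymptotic
\begin{equation*}
\sum_{\substack{1\le h_1,\ldots,h_r\le h\\ h_1,\ldots,h_r \ \text{distinct}}}\mathfrak{S}_0(\{h_1,\ldots,h_r\})=\mu_r\,(-h\log h+Ah)^{r/2}+O\bigl(h^{r/2-1/(7r)+\epsilon}\bigr),
\end{equation*}
with $\mu_r$ the Gaussian moment constants and $A=2-\gamma-\log 2\pi$. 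Establishing cancellation from the trivial bound $O(h^{r})$ down to $h^{r/2}\log^{r/2}h$ is of an entirely different character from \eqref{GalSingAv} and occupies most of their paper; your proposal names it but supplies no mechanism for it. Relatedly, your claim that only configurations with $K$ coincident pairs survive at leading order is not accurate: the diagonal (coincidence) terms alone would give roughly $h^{K}x\log^{K}x$, and it is the off-diagonal $\mathfrak{S}_0$-sums above that contribute the compensating $-\log h$ producing $\log^{K}(x/h)$. Both families enter at the leading order, and the stated error exponent $-1/16K$ can only be obtained by tracking them together. Without a proof, or at least a precise statement, of the $\mathfrak{S}_0$ average, the argument does not close.
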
 

If $K = 1$ we recover the asymptotic formula for $m_{2} (x, h)$ in \eqref{equivalence} for a restricted range of $h$. By Lemma \ref{lemma-2}, we see Theorem \ref{theorem-6} also holds for $M_{2 K} (x, h)$. Hence, we obtain the bound
\begin{equation*}
M_{2 K} (x,h)
 \ll h^{K} x \log^{K} \frac{x}{h}
\end{equation*}
for $\log x \leq h \leq x^{1 / 2 K - \delta}$ for any fixed $K \geq 1$ and $\delta > 0$. Hence, by Lemma \ref{lemma-1} we obtain the following result on large gaps.

\begin{theorem} \label{theorem-7}
Suppose that the Strong Hardy--Littlewood Prime $k$-Tuples Conjecture holds for $2 \leq k \leq 2 K$ and uniformly for $\mathcal{H}_{k} \subset [1, H]$. Then for $x \geq 2$ and $\log x \leq H \leq x^{1 / 2 K - \delta}$ for any fixed $\delta > 0$, we have 
\begin{equation*}
S (x, H)
 \ll \frac{x}{H^{K}} \log^{K} x.
\end{equation*}
\end{theorem}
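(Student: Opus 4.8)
The plan is to combine the moment bound for $M_{2K}(x,h)$ just established with Lemma~\ref{lemma-1}, exactly as was done in the proof of Theorem~\ref{theorem-3} in the case $K=1$. First I would recall that, under the Strong Hardy--Littlewood Prime $k$-Tuples Conjecture for $2 \leq k \leq 2K$, Theorem~\ref{theorem-6} together with Lemma~\ref{lemma-2} yields the bound
\begin{equation*}
M_{2K}(x,h)
 \ll h^{K} x \log^{K} \frac{x}{h},
\end{equation*}
valid for $\log x \leq h \leq x^{1/2K - \delta}$. Since $\log(x/h) \leq \log x$, this immediately gives the slightly weaker but more convenient bound $M_{2K}(x,h) \ll h^{K} x \log^{K} x$ in the same range.

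Next I would apply Lemma~\ref{lemma-1} with the given value of $k = K$, which states that
\begin{equation*}
S(x,H)
 \ll \left(\frac{2}{H}\right)^{2K} M_{2K}\!\left(x, \frac{H}{2}\right).
\end{equation*}
Substituting $h = H/2$ into the moment bound gives $M_{2K}(x, H/2) \ll (H/2)^{K} x \log^{K} x \ll H^{K} x \log^{K} x$, and therefore
\begin{equation*}
S(x,H)
 \ll \frac{1}{H^{2K}} \cdot H^{K} x \log^{K} x
 = \frac{x}{H^{K}} \log^{K} x,
\end{equation*}
which is exactly the claimed estimate. The implied constants absorb the fixed factors $2^{2K}$ and $2^{-K}$ since $K$ is fixed.

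The one point requiring genuine care is matching the ranges of validity. The moment bound for $M_{2K}$ requires $h = H/2$ to satisfy $\log x \leq H/2 \leq x^{1/2K - \delta}$, whereas the theorem asserts the conclusion for $\log x \leq H \leq x^{1/2K - \delta}$. The constant factor of $2$ between $h$ and $H$ is harmless at the upper end (one may shrink $\delta$ slightly, or absorb the constant), and at the lower end the hypothesis $H \geq \log x$ guarantees $h = H/2 \geq \tfrac{1}{2}\log x$, which lies within the admissible range for $h$ after a negligible adjustment of the threshold. The main (very mild) obstacle is thus bookkeeping on these endpoints rather than any new analytic input; the substantive work has already been carried out in Theorem~\ref{theorem-6} and Lemmas~\ref{lemma-1} and~\ref{lemma-2}.
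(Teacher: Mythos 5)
Your proposal is correct and follows exactly the route the paper takes: Theorem~\ref{theorem-6} plus Lemma~\ref{lemma-2} to get $M_{2K}(x,h)\ll h^{K}x\log^{K}(x/h)$, then Lemma~\ref{lemma-1} with $h=H/2$. The endpoint bookkeeping you flag is handled the same way (implicitly, by adjusting $\delta$), so there is nothing to add.
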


This result is consistent with the Poisson Tail Conjecture \eqref{Hope} but, of course, much weaker. 

\section{Application of the Fourth Moment Bound to $\mathcal{C} (x)$ and $\mathcal{S} (x)$}

We expect that 
\begin{equation} \label{expect}
\mathcal{C} (x)
 \sim 2 x \log x \quad \text{and} \quad
\mathcal{S} (x)
 \sim \log^{2} x \quad \text{as} \ \ x \to \infty. 
\end{equation}
We see from \eqref{Cidentity} that the asymptotic formula for $\mathcal{C} (x)$ immediately implies the asymptotic formula for $\mathcal{S} (x)$, and therefore we concentrate on $\mathcal{C} (x)$.

By combining Gallagher's Theorem with the fourth moment bound in Theorem \ref{theorem-7}, we are able to nearly evaluate $\mathcal{C} (x)$. The result we obtain is the following theorem.
\begin{theorem} \label{theorem-8}
Assume the Hardy--Littlewood Prime $k$-Tuples Conjecture as in \eqref{hlconj} and the Strong Hardy--Littlewood $k$-Tuples  Conjecture for $2 \leq k \leq 4$ uniformly for $\mathcal{H}_{k} \subset [1, x^{1 / 4 - \delta} ]$ for any fixed $\delta > 0$. We have
\begin{equation*}
\mathcal{C} (x)
 =  2 x \log x (1 + o (1))+ O \Bigg(\sum_{\substack{p_{n + 1} \leq x \\ p_{n + 1} - p_{n} \geq x^{1 / 4 - \delta}}} (p_{n+1} - p_{n})^{2}\Bigg).
\end{equation*}
\end{theorem}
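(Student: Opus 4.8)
The plan is to split the sum defining $\mathcal{C}(x)$ into two ranges according to whether the gap $p_{n+1}-p_n$ is smaller or larger than the threshold $H_0 = x^{1/4 - \delta}$, so that
\begin{equation*}
\mathcal{C}(x) = \sum_{\substack{p_{n+1} \leq x \\ p_{n+1} - p_n < H_0}} (p_{n+1} - p_n)^2 + \sum_{\substack{p_{n+1} \leq x \\ p_{n+1} - p_n \geq H_0}} (p_{n+1} - p_n)^2.
\end{equation*}
The second sum is exactly the error term displayed in the theorem, so the entire content is to show that the first (small-gap) sum is $2x\log x(1 + o(1))$. First I would reduce this sum to the counting function $S(x,H)$ via the identity $\mathcal{C}(x) = \int_0^x S(x,H)\,dH$ used already in \eqref{C-sum}; restricting to gaps below $H_0$, the small-gap sum equals $\int_0^{H_0} (S(x,H) - S(x,H_0))\,dH$ after accounting for the truncation. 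The goal then becomes extracting the main term from $\int_0^{H_0} S(x,H)\,dH$.

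The key asymptotic input is Gallagher's Theorem (Theorem \ref{theorem-2}), which gives, for $H \sim \lambda \log x$ with $\lambda$ fixed, the relations \eqref{thmNpoisson} and \eqref{thmSpoisson}. Writing $S(x,H) \sim (1 + H/\log x)e^{-H/\log x}\,x$ in the range where $H$ is a bounded multiple of $\log x$, and changing variables to $\lambda = H/\log x$ so that $dH = \log x\,d\lambda$, the contribution to $\int_0^{H_0} S(x,H)\,dH$ from this range is
\begin{equation*}
\int (1 + \lambda)e^{-\lambda}x \cdot \log x\,d\lambda = x\log x \int (1+\lambda)e^{-\lambda}\,d\lambda.
\end{equation*}
Since $\int_0^\infty (1+\lambda)e^{-\lambda}\,d\lambda = 1 + 1 = 2$, this produces precisely the expected main term $2x\log x$. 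I would make this rigorous by choosing a slowly growing cutoff $\Lambda = \Lambda(x) \to \infty$ (for instance $\Lambda = \log\log x$), using Gallagher's asymptotics uniformly for $\lambda \leq \Lambda$ to capture $2x\log x(1+o(1))$, and then controlling the tail $\Lambda \log x \leq H \leq H_0$ by the Montgomery--Soundararajan fourth moment bound of Theorem \ref{theorem-7}.

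The main obstacle, and where most of the care is needed, is the intermediate range $\Lambda\log x \leq H \leq H_0 = x^{1/4-\delta}$, where Gallagher's Theorem no longer applies (it is stated only for $H \sim \lambda\log x$ with $\lambda$ fixed) and we must instead use the unconditional-style bound $S(x,H) \ll (x/H^2)\log^2 x$ coming from the case $K=2$ of Theorem \ref{theorem-7}. The contribution of this tail to $\int S(x,H)\,dH$ is bounded by
\begin{equation*}
\int_{\Lambda \log x}^{x^{1/4}} \frac{x}{H^2}\log^2 x\,dH \ll \frac{x\log^2 x}{\Lambda \log x} = \frac{x\log x}{\Lambda} = o(x\log x),
\end{equation*}
which is negligible precisely because $\Lambda \to \infty$. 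The delicate point is ensuring that the fourth-moment bound is valid throughout $\Lambda\log x \leq H \leq x^{1/4-\delta}$, which is exactly the hypothesis range $\log x \leq H \leq x^{1/2K - \delta}$ with $K = 2$; this is why the theorem demands the Strong Hardy--Littlewood Conjecture for $2 \leq k \leq 4$ on tuples in $[1, x^{1/4-\delta}]$. Once the three ranges are matched up and the truncation correction (the subtracted $S(x,H_0)$ term, which is $\ll x^{1/2+2\delta}\log^2 x = o(x\log x)$) is shown to be lower order, the main term and error term assemble into the claimed formula.
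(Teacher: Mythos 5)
Your proposal is correct and follows essentially the same route as the paper: both express $\mathcal{C}(x)$ via $\int_0^x S(x,H)\,dH$, extract the main term $2x\log x$ from the Poisson range $H\asymp\lambda\log x$ using Theorem \ref{theorem-2} with slowly moving cutoffs, control the intermediate range $\Lambda\log x\le H\le x^{1/4-\delta}$ by the $K=2$ case of Theorem \ref{theorem-7}, and leave the range $H\ge x^{1/4-\delta}$ as the stated error term. The only cosmetic difference is that you split the sum before converting to an integral, which introduces the extra truncation term $H_0\,S(x,H_0)\ll x^{3/4+\delta}\log^2x=o(x\log x)$ that the paper's direct decomposition avoids.
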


We do not expect any prime gaps as large as those in the error term here. However, the existence of a single prime gap of size  $\sqrt{p_{n} \log p_{n}}$ in $[x / 2, x]$ would invalidate the asymptotic formula for $\mathcal{C} (x)$. The Strong Hardy--Littlewood Conjecture without some additional information on how the error terms average when combined can not disprove the existence of such long gaps. As we have seen in \eqref{bestgap}, the pair correlation conjecture can (barely) show such gaps do not exist, but that conjecture is equivalent to a second moment results on primes in an extended range.

\begin{proof}[of Theorem \ref{theorem-8}] As in \eqref{C-sum}, we have
\begin{equation*}
\begin{split}
\mathcal{C} (x)
 &= \int_{0}^{x} S (x, H) \,dH \\
 &= \left(\int_{0}^{\lambda_{0} \log x} + \int_{\lambda_{0} \log x}^{\lambda_{1} \log x} + \int_{\lambda_{1} \log x}^{x^{1 / 4 - \delta}} + \int_{x^{1 / 4 - \delta}}^{x}\right) S (x, H) \,dH \\
 &= I_{1} +I_{2} + I_{3} + I_{4}.
\end{split}
\end{equation*}
Here, we let $\lambda_{0} \to 0$ and $\lambda_{1} \to \infty$ sufficiently slowly. Since $S (x, H) \leq x$,
\begin{equation*}
I_{1}
 \leq \lambda_{0} x \log x
 = o (x \log x).
\end{equation*}
By Theorem \ref{theorem-2},
\begin{equation*}
\begin{split}
I_{2}
 &= (1 + o (1)) x \log x \int_{\lambda_{0}}^{\lambda_{1}} (1 + \lambda) e^{-\lambda} \,d\lambda \\
 &= (1 + o (1)) x \log x \left(e^{-\lambda_{0}} (2 + \lambda_{0}) - e^{-\lambda_{1}} (2 + \lambda_{1})\right) \\
 &= (1 + o (1)) 2 x \log x.
\end{split}
\end{equation*}
Applying Theorem \ref{theorem-7} with $K = 2$, we obtain 
\begin{equation*}
I_{3}
 \ll \int_{\lambda_{1} \log x}^{x^{1 / 4 - \delta}} \frac{x \log^{2} x}{H^{2}} \,dH
 \ll \frac{x \log x}{\lambda_{1}}
 = o (x \log x).
\end{equation*}
Finally,
\begin{equation*}
I_{4}
 = \sum_{\substack{p_{n + 1} \leq x \\ p_{n + 1} - p_{n} \geq x^{1 / 4 - \delta}}} (p_{n + 1} - p_{n})\int_{x^{1 / 4 - \delta}}^{p_{n + 1} - p_{n}} \,dH
 \leq \sum_{\substack{p_{n + 1} \leq x \\ p_{n + 1} - p_{n} \geq x^{1 / 4 - \delta}}} (p_{n + 1} - p_{n})^{2}.
\end{equation*}
This completes the proof of Theorem \ref{theorem-8}. 
\end{proof}
\bigskip

\section{Some Numerical Results on Large Gaps}

In this section we present some numerical studies related to the behaviors addressed in this paper. It is instructive first to recall how the largest gap between primes no greater than $x$ increases with $x$. Figure \ref{FigSupg} is a plot of the maximal gap 
\begin{equation*}
g^{*} (x)
 = \sup_{p_{n + 1} \leq x}{\left(p_{n + 1} - p_{n}\right)},
\end{equation*}
along with the analytical asymptotic form 
\begin{equation*}
\widetilde{g}^{*} (x) = \log^{2} x
\end{equation*}
advanced by Cram{\'e}r \cite{Cramer36} over a representative sampling of approximately logarithmically spaced prime $x$.

\begin{figure}[!ht]
\begin{center}
\includegraphics[width=0.89\linewidth]{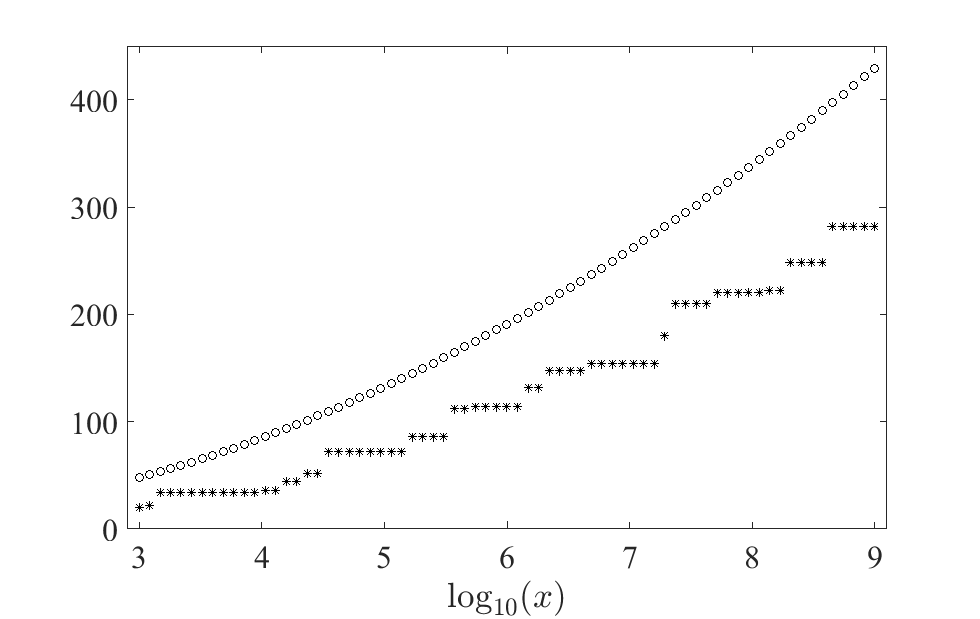}
\caption{ $g^{*} (x)$ and $\gwt^{*} (x)$ plotted in asterisks and circles, respectively, for a representative sampling of prime $x$. 
\label{FigSupg}}
\end{center}
\end{figure}

Next, we consider the large-gap counting function $N(x,H)$. For convenience, let us define the expected asymptotic form in \eqref{Npoisson} as $\widetilde{N}(x, \lambda \log x) = e^{-\lambda} x / \log x$.
Figures \ref{FigNlam1}, \ref{FigNlam3}, and \ref{FigNlam6} are logarithmic plots of $N (x, \lambda\log x)$ along with $\widetilde{N} (x, \lambda \log x)$ for $\lambda = 1, 3$, and $6$, respectively.

\begin{figure}[!ht]
\begin{center}
\includegraphics[width=0.89\linewidth]{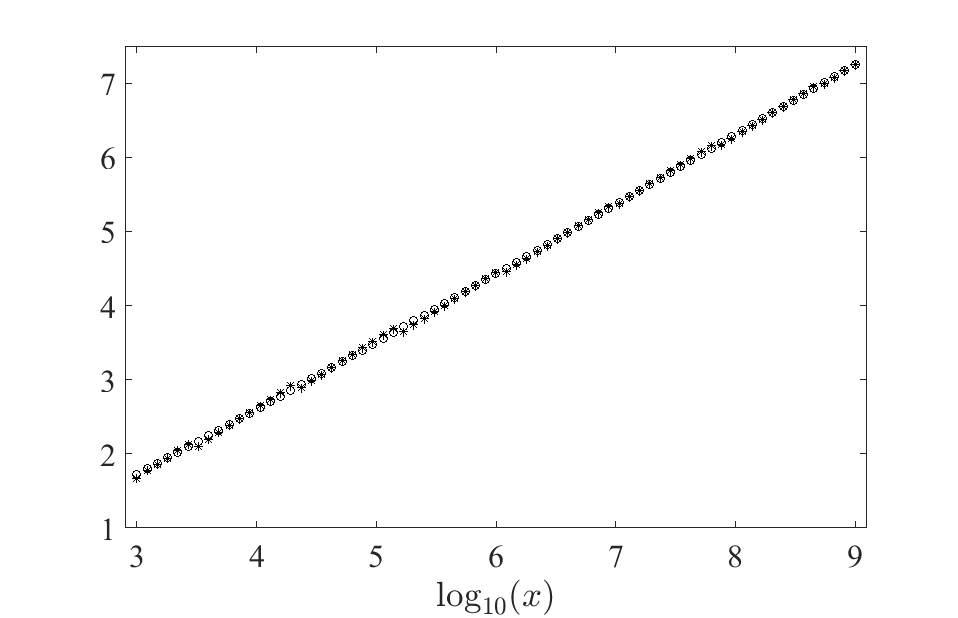}
\caption{$\log_{10} N (x,\log x)$ and $\log_{10} \widetilde{N} (x, \log x)$ plotted in asterisks and circles, respectively, for a representative sampling of prime $x$.
\label{FigNlam1}}

\vspace{15pt}

\includegraphics[width=0.89\linewidth]{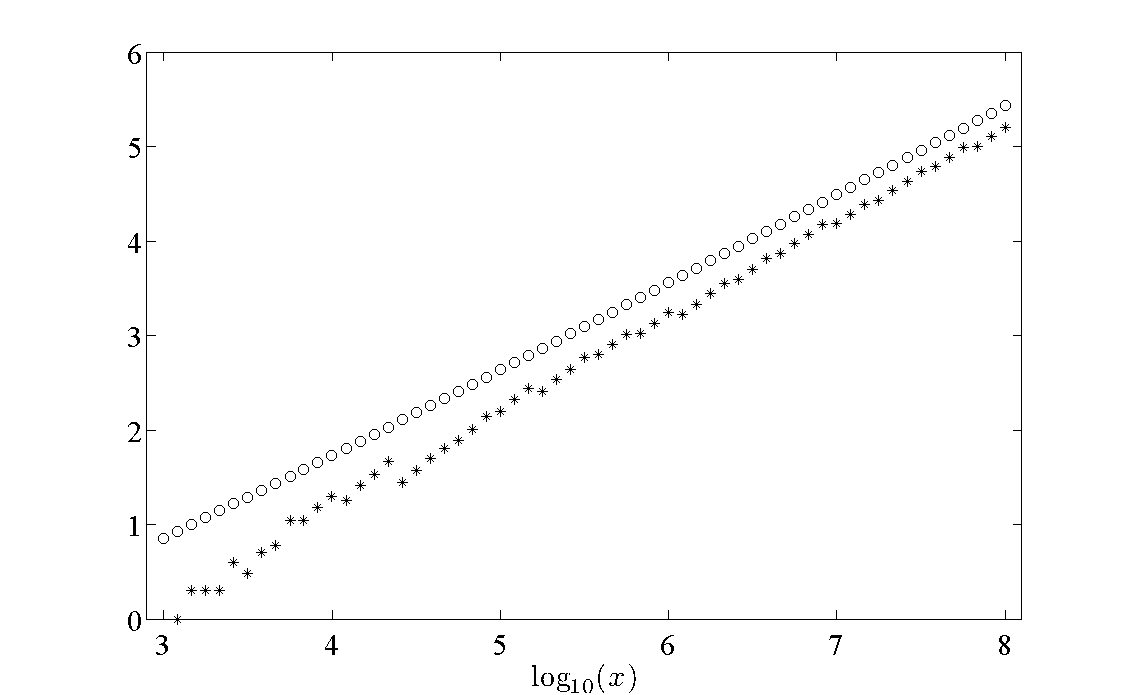}
\caption{$\log_{10} N (x, 3 \log x)$ and $\log_{10} \widetilde{N}(x, 3 \log x)$ plotted in asterisks and circles, respectively, for a representative sampling of prime $x$. 
\label{FigNlam3}}
\end{center}
\end{figure}

\begin{figure}[!ht]
\begin{center}
\includegraphics[width=0.89\linewidth]{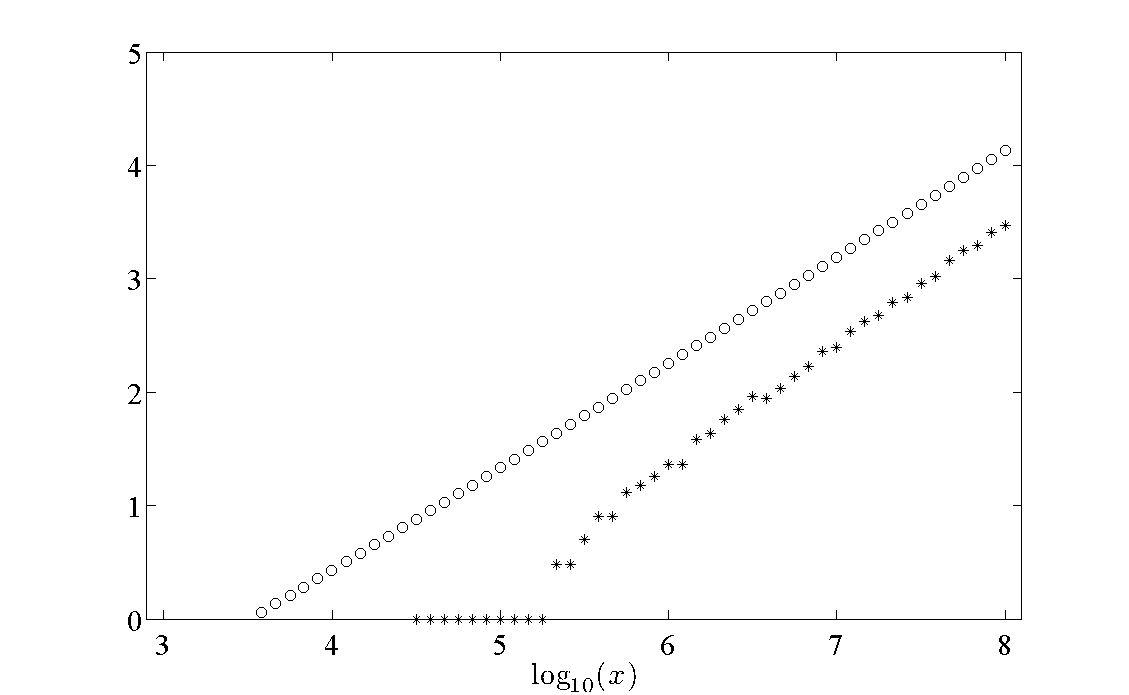}
\caption{$\log_{10} N (x, 6 \log x)$ and $\log_{10} \widetilde{N} (x, 6 \log x)$ plotted in asterisks and circles, respectively, for a representative sampling of prime $x$. 
\label{FigNlam6}}
\end{center}
\end{figure}

The weighted analogue of $N(x,H)$ is $S(x,H)$ from \eqref{S (x, H)}, and its expected asymptotic form in \eqref{Spoisson} is defined here as $\widetilde{S} (x, \lambda \log x) = (1 + \lambda) e^{-\lambda} x$ for $H = \lambda \log x$. Figures \ref{FigSlam1}, \ref{FigSlam3}, and \ref{FigSlam6} are logarithmic plots of $S(x, \lambda \log x)$ along with $\widetilde{S}(x, \lambda \log x)$ for $\lambda=1, 3$, and $6$.

\begin{figure}[!ht]
\begin{center}
\includegraphics[width=0.89\linewidth]{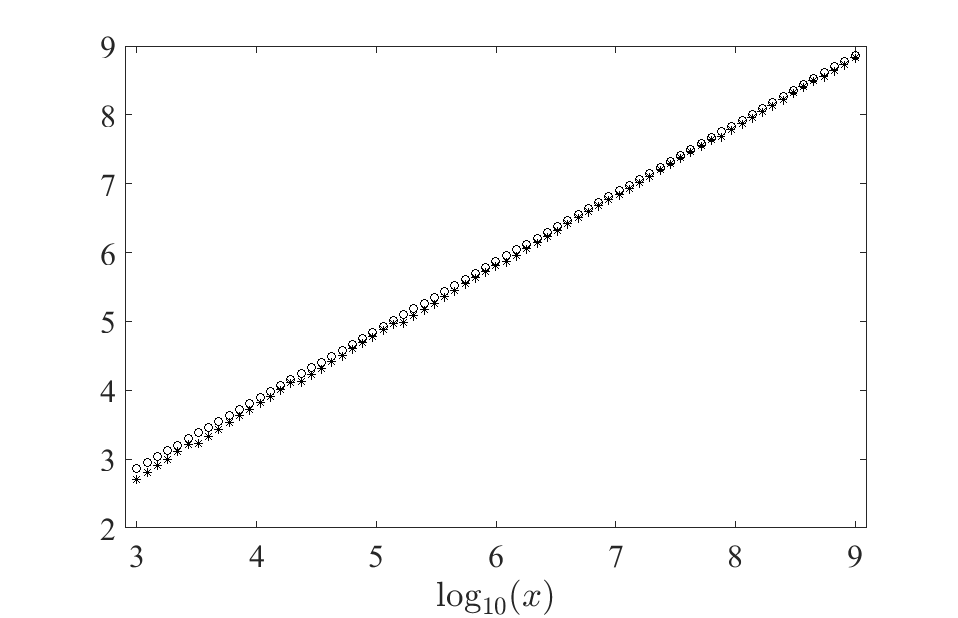}
\caption{ $\log_{10} S (x, \log x)$ and $\log_{10} \widetilde{S}(x,\log x)$ plotted in asterisks and circles, respectively, for a representative sampling of prime $x$.
\label{FigSlam1}}
\end{center}
\end{figure}

\begin{figure}[!ht]
\begin{center}
\includegraphics[width=0.89\linewidth]{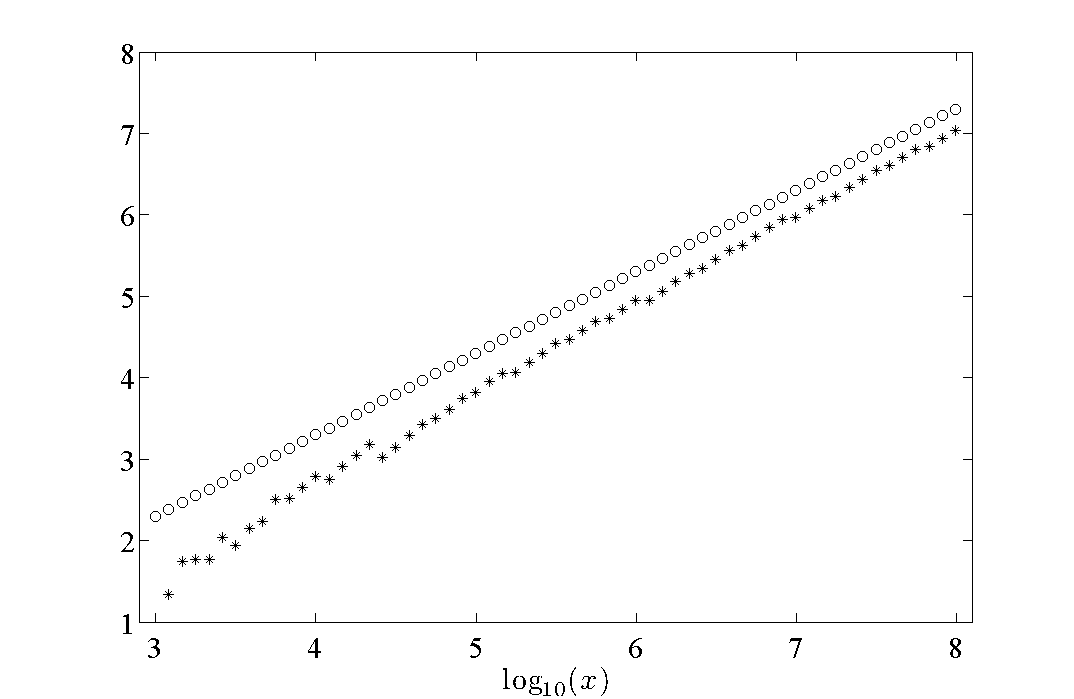}
\caption{ $\log_{10} S (x,3 \log x)$ and $\log_{10} \widetilde{S} (x, 3 \log x)$ plotted in asterisks and circles, respectively, for a representative sampling of prime $x$. 
\label{FigSlam3}}

\vspace{15pt}

\includegraphics[width=0.89\linewidth]{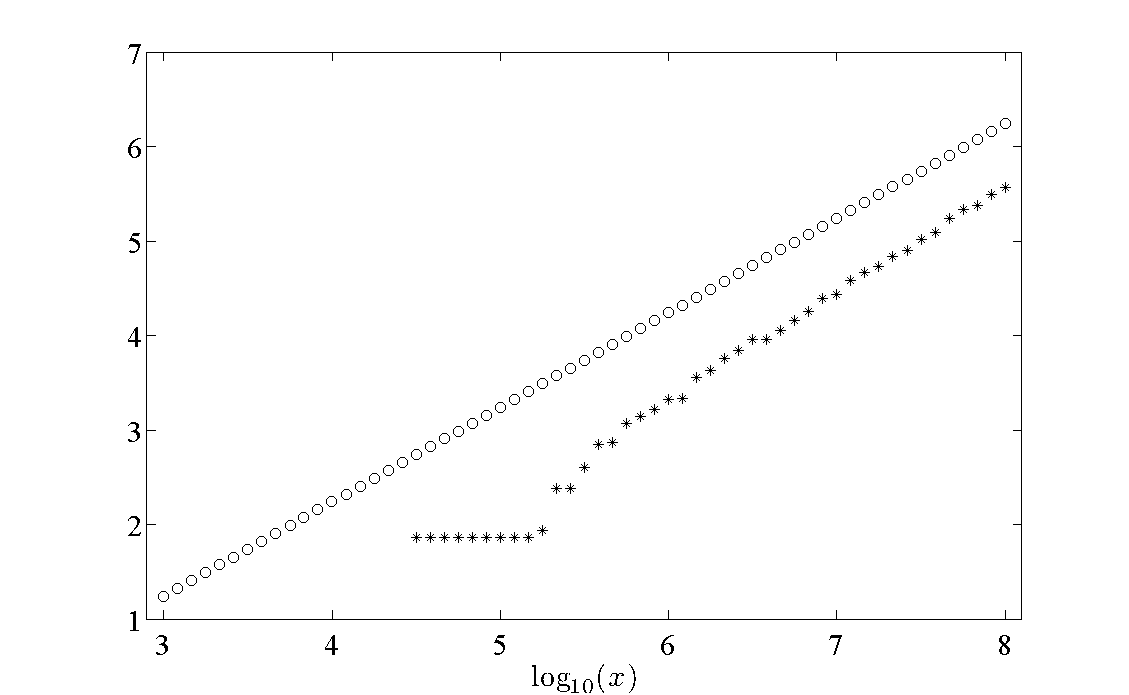}
\caption{ $\log_{10} S (x, 6 \log x)$ and $\log_{10} \widetilde{S}(x, 6 \log x)$ plotted in asterisks and circles, respectively, for a representative sampling of prime $x$. 
\label{FigSlam6}}
\end{center}
\end{figure}

The relative errors in $\widetilde{N}(x, \lambda \log x)$ and $\widetilde{S}(x, \lambda \log x)$ decrease with increasing $x$, for a given $\lambda$, in support of the conjectured asymptotic behaviors. We also find, however, that the relative error increases with increasing $\lambda$ for a fixed $x$. We may interpret this behavior as being consistent with the expected non-Poissonian properties in the distribution of large gaps.  

Finally, let us consider the terms $\mathcal{C}(x)$ and $\mathcal{S}(x)$ and the expected behaviors articulated in \eqref{expect}. For convenience, we define $\widetilde{\mathcal{C}} (x) = 2 x \log x$ to represent the expected asymptotic form of $\mathcal{C}(x)$. Curiously, the asymptotic form of $\mathcal{S}(x)$ is identical to the Cram\'er's maximal gap bound $\widetilde{g}^{*} (x)$. Figure \ref{FigCcal} is a logarithmic plot of $\mathcal{C}(x)$ along with $\widetilde{\mathcal{C}}(x)$. Figure \ref{FigScal} is a logarithmic plot of $\mathcal{S}(x)$ along with $\widetilde{g}^{*}(x)$.

\begin{figure}[!ht]
\begin{center}
\includegraphics[width=0.89\linewidth]{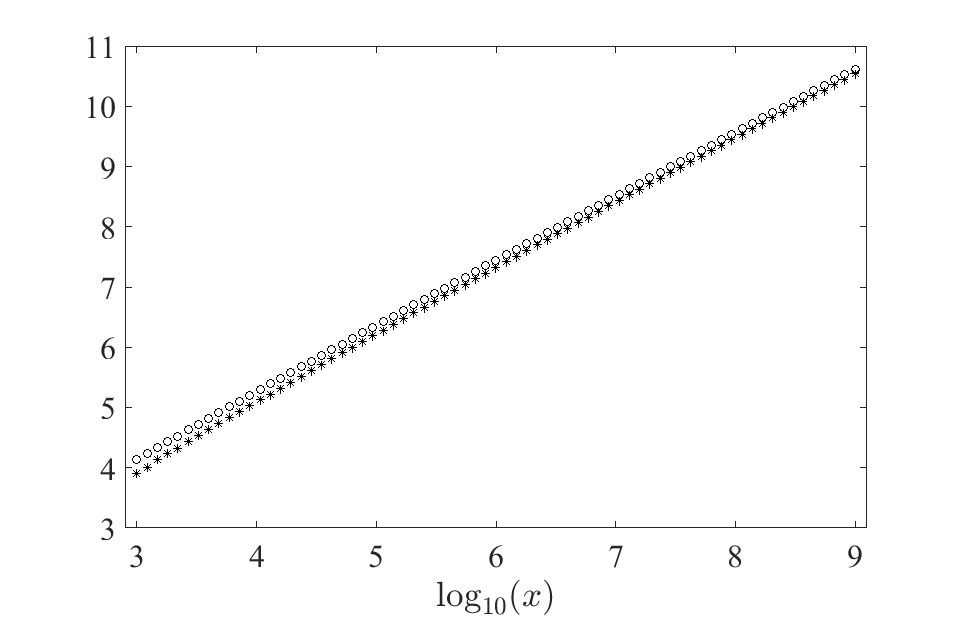}
\caption{ $\log_{10} \mathcal{C} (x)$ and $\log_{10} \widetilde{\mathcal{C}} (x)$ plotted in asterisks and circles, respectively, for a representative sampling of prime $x$. 
\label{FigCcal}}

\vspace{15pt}

\includegraphics[width=0.89\linewidth]{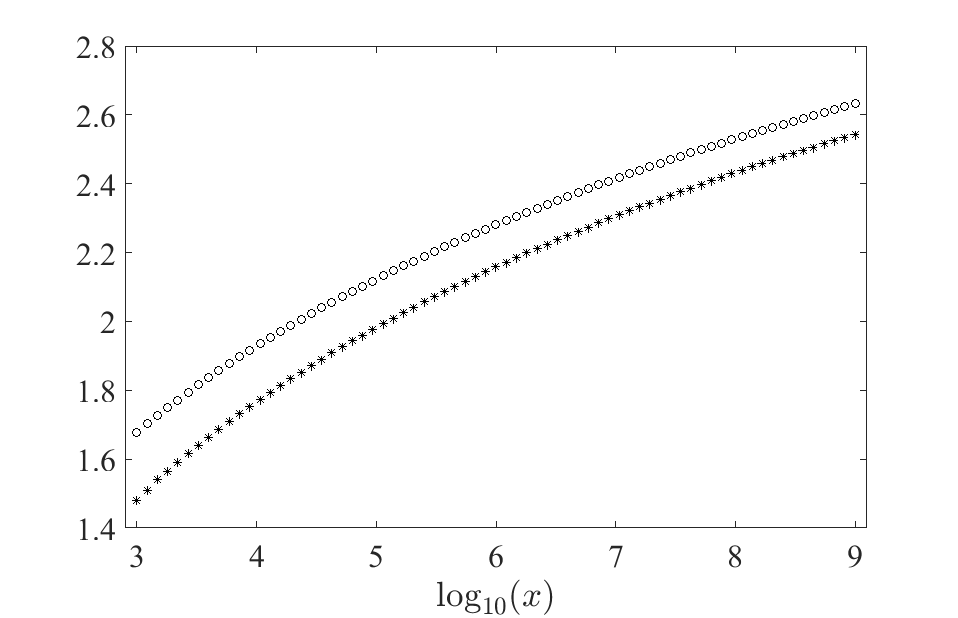}
\caption{ $\log_{10} \mathcal{S} (x)$ and $\log_{10} \widetilde{g}^{*} (x)$ plotted in asterisks and circles, respectively, for a representative sampling of prime $x$. 
\label{FigScal}}
\end{center}
\end{figure}


\begin{thebibliography}{99}

\bibitem{CarneiroMilinovichSoundararajan2017} E. Carneiro, M.B. Milinovich, and K. Soundararajan, Fourier optimization and prime gaps, submitted for publication. [Available at \url{https://arxiv.org/abs/1708.04122}].

\bibitem{Cramer20} H. Cram{\'e}r, Some theorems concerning prime numbers, Arkiv f{\"o}r Mat. Astr. och Fys. {\bf 15} (1920), no. 5, 1--32.

\bibitem{Cramer35} H. Cram{\'e}r, Prime numbers and probability, Skand. Mat.-Kongr. {\bf 8} (1935), 107--115.

\bibitem{Cramer36} H. Cram{\'e}r, On the order of magnitude of the difference between consecutive prime numbers, Acta Arith. {\bf 2} (1936), 23--46.

\bibitem{Gallagher1976} P.X. Gallagher, On the distribution of primes in short intervals, Mathematika {\bf 23} (1976), 4--9, Corrigendum: Mathematika {\bf 28} (1981), no. 1, 86.

\bibitem{GallagherMueller1978} P.X. Gallagher and J.H. Mueller, Primes and zeros in short intervals, J. Reine Angew. Math. {\bf 303/304} (1978), 205--220.

\bibitem{Gallagher1980} P.X. Gallagher, Some consequences of the Riemann hypothesis, Acta Arith. {\bf 37} (1980), 339--343. 

\bibitem{GoldstonHeath-Brown1984} D.A. Goldston and D.R. Heath-Brown, A note on the difference between consecutive primes, Math. Annalen {\bf 266} (1984), 317--320.

\bibitem{GoldstonLedoan2013} D.A. Goldston and A.H. Ledoan, On the Differences Between Consecutive Prime Numbers, I, Integers {\bf 12B} (2012/13), Proceedings of the Integers Conference 2011, Paper No. A3, 8 pp. Also appears in: Combinatorial Number Theory (Proceedings of the ``Integers Conference 2011,'' Carrollton, Georgia, October 26-29, 2011), De Gruyter Proceedings in Mathematics, 2013, 37--44.


\bibitem{GoldstonMontgomery1987} D.A. Goldston and H.L. Montgomery, Pair Correlation of Zeros and Primes in Short Intervals, in: Analytic Number Theory and Diophantine Problems, Birkhauser, Boston, Mass., 1987, 183--203.

\bibitem{Granville1995} A. Granville, Harald Cram{\'e}r and the distribution of prime numbers, Harald Cram{\'e}r Symposium (Stockholm, 1993), Scand. Actuar. J., 1995, no. 1, 12--28.

\bibitem{HalberstamRichert1974} H. Halberstam and H.-E. Richert, Sieve Methods, London Mathematical Society Monographs {\bf 4}, Academic Press, London--New York--San Francisco, 1974.

\bibitem{HardyLittlewood1922} G.H. Hardy and J.E. Littlewood, Some problems of `Partitio numerorum'; III: On the expression of a number as a sum of primes, Acta Math. {\bf 44} (1922), no. 1, 1--70. Reprinted as pp. 561--630 in {\it Collected Papers of G.H. Hardy}, Vol. I (including joint papers with J.E. Littlewood and others; edited by a committee appointed by the London Mathematical Society), Clarendon Press, Oxford University Press, Oxford, 1966.

\bibitem{Heath-Brown82} D.R. Heath-Brown, Gaps between primes, and the pair correlation of zeros of the zeta-function, Acta Arith. {\bf  41} (1982), 85--99.

\bibitem{Maier1985} H. Maier, Primes in short intervals, Michigan Math. J. {\bf 32} (1985), no. 2, 221--225.

\bibitem{Montgomery1972} H.L. Montgomery, The Pair Correlation of Zeros of the Zeta Function, in: Analytic Number Theory, Proc. Sympos. Pure Math., Vol. XXIV, St. Louis Univ., St. Louis, Mo., 1972, pp. 181--193, Amer. Math. Soc., Providence, R.I., 1973.
              
\bibitem{Montgomery2014} H.L. Montgomery, Early Fourier Analysis, Pure and Applied Undergraduate Texts {\bf 22}, Amer. Math. Soc., Providence, R.I., 2014.


\bibitem{MontgomerySoundararajan2004} H.L. Montgomery and K. Soundararajan, Primes in short intervals, Comm. Math. Phys. {\bf 252} (2004), no. 1-3, 589--617.

\bibitem{Mueller1981} J.H. Mueller, On the Difference Between Consecutive Primes, in: Recent Progress in Analytic Number Theory, Vol. 1 (Durham, 1979), pp. 269--273, Academic Press, London--New York, 1981.

\bibitem{SaffariVaughan1977} B. Saffari and R.C. Vaughan, On the fractional parts of $x / n$ and related sequences, II, Ann. Inst. Fourier (Grenoble) {\bf 27} (1977), no. 2, 1--30.

\bibitem {Selberg1942} A. Selberg, On the normal density of primes in small intervals, and the difference between consecutive primes, Arch. Math. Naturvid. {\bf 47} (1943), no. 6, 87--105. 

\end{thebibliography}
\end{document}